\numberwithin{equation}{section}
\newtheorem{theorem}{Theorem}[section]
\newtheorem{corollary}{Corollary}[theorem]
\newtheorem{lemma}[theorem]{Lemma}
\newtheorem{proposition}[theorem]{Proposition}
\newtheorem*{remark}{Remark}
\newtheorem{definition}[theorem]{Definition}
\newcommand{\upd}{\mathrm{d}}  
\def\a{\alpha }     \def\b{\beta  }         \def\g{\gamma }
     \def\d{\delta}          \def\D{\Delta }
    \def\ve{\varepsilon}    
\def\h{\eta }               
\def\k{\kappa }          \def\l{\lambda }    
    \def\m{\mu}             \def\n{\nu}         
    \def\r{\rho}                 
     \def\t{\tau }
       \def\w{\omega }         \def\W{\Omega }
\title{A mathematical design strategy for highly dispersive resonator systems\thanks{The work of KA was supported by the project ETH-34 20-2 and the work of BD was supported by EC H2020 FETOpen project BOHEME under grant agreement No.~863179.}}
\author{Konstantinos Alexopoulos\thanks{Department of Mathematics, ETH Zurich, R\"amistrasse 101, CH-8092 Zurich, Switzerland, konstantinos.alexopoulos@sam.math.ethz.ch.} \and Bryn Davies\thanks{Department of Mathematics, Imperial College London, 180 Queen's Gate, London SW7 2AZ, United Kingdom, bryn.davies@imperial.ac.uk.}}
\date{}
\begin{document}

\maketitle

\begin{abstract}
    Designing devices composed of many small resonators is a challenging problem that can easily incur significant computational cost. Can asymptotic techniques be used to overcome this often limiting factor? Integral methods and asymptotic techniques have been used to derive concise characterisations for scattering by resonators, but can these be generalised to systems of many dispersive resonators whose material parameters have highly non-linear frequency dependence? In this paper, we study halide perovskite resonators as a demonstrative example. We extend previous work to show how a finite number of coupled resonators can be modelled concisely in the limit of small radius. We also show how these results can be used as the basis for an inverse design strategy, to design resonator systems that resonate at specific frequencies.
\end{abstract}

\noindent\textbf{Key words}: asymptotic expansion, halide perovskite, metamaterial, structural colour, non-linear permittivity, coupling, hybridization

\tableofcontents

\section{Introduction}

When multiple resonators are allowed to influence one another coupling interactions take place, which can often be complex and difficult to model. Understanding how these interactions depend on the shapes, sizes and positions of resonators has allowed scientists and engineers to design devices with exotic and remarkable properties. Some notable examples include effectively negative material parameters \cite{pendry2000negative, veselago1968electrodynamics}, cloaking devices \cite{milton2006cloaking, craster2012acoustic} and bio-inspired structural colouration \cite{sun2013structural, zhao2012bio}. The word \emph{metamaterial} is a broad term that is often used as a catchall term to encompass materials whose emergent properties arise due to geometry and structure (as opposed to purely from chemistry) \cite{kadic20193dmetamaterials}.

For designing complex devices, it is valuable to be able to model systems of coupled resonators without the need for expensive numerical simulations (\emph{e.g.} with commercial finite element packages). For this reason, there has been significant mathematical interest in developing concise models for coupled resonator systems. A prominent field in this direction is multiple scattering theory \cite{tsang2004scattering}. These techniques are particularly effective for modelling either small (point) scatterers or systems whose geometry admits explicit representations (\emph{e.g.} cylinders or spheres) \cite{TB, gower2019multiple}. To describe resonators with general, possibly complex, shapes, integral methods can be used \cite{AFKRYZ}. On top of this, asymptotic techniques have helped provide concise characterisations of complex problems. Homogenisation can be used characterise the effective properties of materials with periodic \cite{BBF, GZ}, quasi-periodic \cite{bouchitte2010homogenization} or random \cite{foldy1945multiple} micro-structures. Local properties can also be deduced through asymptotic approaches. For example, asymptotic expansions can be computed when resonators are very small or have highly contrasting material parameters \cite{ADFMS, ADH}.

Extending existing asymptotic and integral methods to models of dispersive resonators, with physically realistic material parameters, has proved to be a challenging problem. Some recent progress has been made for the well-known Drude model \cite{baldassari2021modal} and for halide perovskites \cite{AD}. In these cases, resonant frequencies of the coupled resonator system cannot be found by solving a simple eigenvalue problem, as the associated eigenvalue problem inherits the non-linearity of the permittivity relation. 

In this work, we will focus on halide perovskites, as a demonstrative example of the asymptotic and integral techniques we will exploit. Halide perovskites are materials which are increasingly being used in optical devices. Their underly chemisty consists of octohedral-shaped crystalline lattices containing atoms of heavier halides, such as chlorine, bromine and iodine \cite{AM}. When used in microscopic devices, their high absorption coefficient helps absorb the complete visible spectrum. This, combined with the fact that they are cheap and easy to manufacture, means they are playing a prominent role in the production of electromagnetic devices \cite{GPLLZZSQKJ, JKM,MFTHBPZK,S,WKYZZPLBHL}.

In this paper, we will use integral methods to study a broad class of geometries of halide perovskite resonators. This extends the theory developed in \cite{AD} for one and two resonators to the case of three or more halide perovskite nano-particles. In section~\ref{sec:asymptotics}, we will present the integral formulation of the resonance problem that we are studying. We will use asymptotic techniques to show how this system can be approximated in the case that the resonators are small. In section~\ref{sec:frequencies}, we will show how these results can be used to find the resonant frequencies of a coupled system of circular halide perovskite resonators and present numerical visualisations. Our results will be for a two-dimensional differential system, however we will show (in the appendix) how these results can easily be modified to three dimensions.

In the final part of this paper, in section~\ref{sec:inverse}, we will use our asymptotic results to treat an inverse design problem. In particular, given three wavelengths of visible light, we will show that a system of three identical circular halide perovskite resonators can be chosen to resonate at those wavelengths and present an efficient strategy for deriving the appropriate geometry. This problem is inspired by the sensitivity of retinal receptor cells to three colours of light (red, blue and green). This shows that, with the help of our mathematical insight, it is possible to add customisable colour perception to bioinspired artificial eyes \cite{GPLLZZSQKJ, lee2018bioinspired}.

\section{Asymptotic analysis} \label{sec:asymptotics}

\subsection{Problem setting}

Let us consider $N\in\mathbb{N}$ halide perovskite resonators $D_1,D_2,\dots,D_N$ occupying a bounded domain $\Omega \subset \mathbb{R}^d$, for $d\in \{2,3\}$. We assume that the resonators have permittivity given by
\begin{align}\label{prmtvt}
    \ve(\w,k) = \ve_0 + \frac{\a}{\b - \w^2 + \h k^2 - i \g \w},
\end{align}
where $\a,\b,\g,\h$ are positive constants. This is motivated by the formula for the permittivity of halide perovskites reported in \cite{MFTHBPZK}. The non-linear dependence on both the frequency $\w$ and the wavenumber $k$ are responsible for the complex, dispersive behaviour of the material. We assume that the particles are non-magnetic, so that the magnetic permeability $\m_0$ is constant on all of $\mathbb{R}^d$. 

We consider the Helmholtz equation as a model for the propagation of time-harmonic waves with frequency $\w$. This is a reasonable model for the scattering of transverse magnetic polarised light (see \emph{e.g.} \cite[Remark~2.1]{moiola2019acoustic} for a discussion). The wavenumber in the background $\mathbb{R}^d \setminus \overline{\W}$ is given by $k_0:=\w\ve_0\m_0$ and we will use $k$ to denote the wavenumber within $\W$. Let us note here that, from now on, we will suppress the dependence of $k_0$ on $\w$ for brevity.  We, then, consider the following Helmholtz model for light propagation:
\begin{align}\label{Helmholtz problem}
    \begin{cases}
    \D u + \w^2 \ve(\w,k)\m_0 u = 0 \ \ \ &\text{ in } \W, \\
    \D u + k_0^2 u = 0 &\text{ in } \mathbb{R}^d\setminus\overline{\W}, \\
    u|_+ - u|_-=0 &\text{ on } \partial\W, \\
    \frac{\partial u}{\partial \n}|_+ - \frac{\partial u}{\partial \n}|_- = 0 &\text{ on } \partial\W, \\
    u(x) - u_{in}(x) &\text{ satisfies the outgoing radiation condition as } |x|\to \infty,\\
    \end{cases}
\end{align}
where $u_{in}$ is the incident wave, assumed to satisfy
\begin{align*}
    (\D + k_0^2)u_{in}=0 \ \ \text{ in } \mathbb{R}^d,
\end{align*}
and the appropriate outgoing radiation condition is the Sommerfeld radiation condition, which requires that
\begin{equation}\label{Sommerfeld}
    \lim_{|x|\to\infty} |x|^{\frac{d-1}{2}} \left( \frac{\partial}{\partial |x|} - i k_0 \right)\Big(u(x) - u_{in}(x)\Big)=0.
\end{equation}
In particular, we are interested in the case of small resonators. Thus, we will assume that there exists some fixed domain $D$, which the the union of $N$ disjoint subsets $D = D_1 \cup D_2 \cup \dots\cup D_N$, such that $\W$ is given by
\begin{equation}
    \W = \d D + z,
\end{equation}
for some position $z\in\mathbb{R}^d$ and characteristic size $0<\d\ll1$. Then, making a change of variables, the Helmholtz problem (\ref{Helmholtz problem}) becomes
\begin{align}\label{2.4}
    \begin{cases}
    \D u + \d^2 \w^2 \ve(\w,k)\m_0 u = 0 \ \ \ &\text{ in } D, \\
    \D u + \d^2 k_0^2 u = 0 &\text{ in } \mathbb{R}^d\setminus\overline{D}, \\
    \end{cases}
\end{align}
along with the same transmission conditions on $\partial D$ and far-field behaviour. We are interested in the \emph{subwavelength} behaviour of the system, which occurs when $\d \ll k_0^{-1}$. We will study this by performing asymptotics in the regime that the frequency $\w$ is fixed while the size $\d\to0$. We will characterise solutions to (\ref{Helmholtz problem}) in terms of the system's resonant frequencies. For a given wavenumber $k$, we define $\w=\w(k)$ to be a \emph{resonant frequency} if it is such that there exists a non-trivial solution $u$ to (\ref{Helmholtz problem}) in the case that $u_{in}=0$.

\subsection{Integral formulation}

Let $G(x,k)$ be the outgoing Helmholtz Green's function in $\mathbb{R}^d$, defined as the unique solution to $(\D + k^2) G(x,k) = \d_0(x)$ in $\mathbb{R}^d$, along with the outgoing radiation condition (\ref{Sommerfeld}). It is well known that $G$ is given by
\begin{align}
    G(x,k)=
    \begin{cases}
    -\frac{i}{4}H^{(1)}_0(k|x|), \ \ \ &d=2,\\
    -\frac{e^{ik|x|}}{4\pi|x|}, &d=3,\\
    \end{cases}
\end{align}
where $H_0^{(1)}$ is the Hankel function of first kind and order zero. Then, from \emph{e.g.} \cite{AD}, we have the following result, which gives an integral representation of the scattering problem.

\begin{theorem}[Lippmann-Schwinger integral representation formula] \label{Integral representation}
The solution to the Helmholtz problem (\ref{Helmholtz problem}) is given by
\begin{align}\label{Lipmann-Schwinger}
    u(x)-u_{in}(x) = -\d^2 \w^2 \xi(\w,k) \int_D G(x-y,\d k_0) u(y) \upd y, \ \ x\in\mathbb{R}^d,
\end{align}
where the function $\xi:\mathbb{C}\to\mathbb{C}$ describes the permittivity contrast between $D$ and the background and is given by
$$
\xi(\w,k) = \m_0(\ve(\w,k) - \ve_0).
$$
\end{theorem}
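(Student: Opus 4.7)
The plan is to recast the piecewise transmission problem (\ref{2.4}) as a single inhomogeneous Helmholtz equation on all of $\mathbb{R}^d$ and then invert the operator $\D+\d^2 k_0^2$ using the outgoing Green's function $G(\cdot,\d k_0)$. First, I would observe that the interior equation in (\ref{2.4}) can be rewritten as
\begin{equation*}
\D u + \d^2 k_0^2 u = -\d^2 \w^2 \m_0\bigl(\ve(\w,k)-\ve_0\bigr) u = -\d^2 \w^2 \xi(\w,k) u \quad \text{in } D,
\end{equation*}
while the exterior equation reads $\D u + \d^2 k_0^2 u = 0$ in $\mathbb{R}^d\setminus\overline{D}$. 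Hence, away from $\partial D$, the function $u$ satisfies the single PDE $\D u + \d^2 k_0^2 u = -\d^2 \w^2 \xi(\w,k) u \, \mathbbm{1}_D$ pointwise.

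The second step is to promote this pointwise identity to a genuine distributional equation on all of $\mathbb{R}^d$. This is where the transmission conditions $u|_+ = u|_-$ and $\partial_\n u|_+ = \partial_\n u|_-$ on $\partial D$ play their role: testing against an arbitrary $\vf \in C_c^\infty(\mathbb{R}^d)$ and applying Green's identity separately in $D$ and in $\mathbb{R}^d\setminus\overline{D}$, the boundary integrals over $\partial D$ from the two sides cancel exactly because of the continuity of $u$ and of its normal derivative. Consequently, there are no additional surface measures on $\partial D$, and the identity
\begin{equation*}
(\D + \d^2 k_0^2) u = -\d^2 \w^2 \xi(\w,k)\, u\, \mathbbm{1}_D \qquad \text{in } \mathcal{D}'(\mathbb{R}^d)
\end{equation*}
holds globally. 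Setting $v := u - u_{in}$, and using that $u_{in}$ satisfies $(\D + \d^2 k_0^2) u_{in} = 0$ on $\mathbb{R}^d$ (this is the rescaled version of the assumption on $u_{in}$), the same equation is inherited by $v$, and $v$ moreover satisfies the Sommerfeld radiation condition (\ref{Sommerfeld}) by definition.

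Finally, I would invoke uniqueness for the outgoing problem: the unique tempered distributional solution of $(\D + \d^2 k_0^2) v = f$ with outgoing radiation and $f$ of compact support is the convolution $v = G(\cdot, \d k_0) * f$. Applying this with $f = -\d^2 \w^2 \xi(\w,k)\, u\, \mathbbm{1}_D$ yields exactly (\ref{Lipmann-Schwinger}). I anticipate that the main obstacle is the second step, namely the careful verification that the transmission conditions make the combined equation hold distributionally on $\mathbb{R}^d$ without any spurious $\partial D$-supported terms; once this is in place, the uniqueness-plus-Green's-function argument is routine and applies uniformly for $d\in\{2,3\}$ with the corresponding expressions for $G$ recorded above.
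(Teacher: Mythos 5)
Your proof is correct and is the canonical Lippmann--Schwinger derivation. The paper itself does not prove Theorem~\ref{Integral representation} but cites its companion work \cite{AD} for it, so there is no in-text argument to compare against; nevertheless the route you take --- recast the interior equation as $\D u + \d^2 k_0^2 u = -\d^2\w^2\xi(\w,k)u$ in $D$, use the two transmission conditions to show that the distributional Laplacian of $u$ has no singular layer supported on $\partial D$ (so the equation $(\D+\d^2 k_0^2)u = -\d^2\w^2\xi u\,\mathbbm{1}_D$ holds on all of $\mathbb{R}^d$), subtract $u_{in}$, and invoke uniqueness of the outgoing solution to represent $v=u-u_{in}$ as the convolution with $G(\cdot,\d k_0)$ --- is exactly the standard argument one would expect the cited source to use. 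The one point worth flagging for precision is that your first displayed identity implicitly uses $k_0^2=\w^2\ve_0\m_0$; the paper's prose defines $k_0=\w\ve_0\m_0$, which appears to be a typo in the source rather than an error on your part, since your convention is the one consistent with the exterior equation in \eqref{2.4} and with the cancellation that produces $\xi(\w,k)=\m_0(\ve(\w,k)-\ve_0)$.
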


Since the domains $D_1,\dots,D_N$ are disjoint, the field $u-u_{in}$ scattered by the $N$ particles can be written as
\begin{align}\label{repping}
    (u-u_{in})(x) = - \d^2 \w^2 \xi(\w,k) \sum_{i=1}^N \int_{D_i} G(x-y,\d k_0) u(y), \ \text{ for } x \in \mathbb{R}^d.
\end{align}

We are interested in understanding how the formula (\ref{Lipmann-Schwinger}) behaves in the case that $\d$ is small. For this, the asymptotic expansions of the Green's function will be of great help. Although, we have to distinguish the cases of two and three dimensions, since these expansions differ in each case. We will work on the two-dimensional setting as the asymptotic expansions are more complicated. The same method can be used in three-dimensions, although the analysis is slightly easier. We present some of the key details in Appendix~\ref{app:3d}.

\subsection{Two-dimensional analysis}

Let us assume that we work in dimension $d = 2$ and let us consider $N$ halide perovskite resonators $D_1, D_2,\dots , D_N$, made from the same material. We define the operators $K^{\d k_0}_{D_i}$ and $R^{\d k_0}_{D_i D_j}$, for $i,j=1,2,\dots,N$, $i\ne j$, as follows.

\begin{definition} \label{def:KR}
We define the integral operators $K^{\d k_0}_{D_i}$ and $R^{\d k_0}_{D_i D_j}$, for $i,j=1,2,...,N$, by
\begin{align*}
    K^{\d k_0}_{D_i}: u\big|_{D_i} \in L^2(D_i) \longmapsto - \int_{D_i} G(x-y,\d k_0) u(y) \upd y \Big|_{D_i} \in L^2(D_i)
\end{align*}
and
\begin{align*}
    R^{\d k_0}_{D_i D_j}: u\big|_{D_i} \in L^2(D_i) \longmapsto - \int_{D_i} G(x-y,\d k_0) u(y) \upd y \Big|_{D_j} \in L^2(D_j).
\end{align*}
\end{definition}
We continue by recalling from \cite{AD} some results concerning the asymptotic behaviour of these integral operators.
\begin{definition}\label{MN}
We define the integral operators $M^{\d k_0}_{D_i}$ and $N^{\d k_0}_{D_i D_j}$ for $i,j=1,2,\dots,N$, $i\ne j$, as
\begin{align*}
    M^{\d k_0}_{D_i} := \hat{K}^{\d k_0}_{D_i} + K^{(0)}_{D_i} + (\d k_0)^2 \log(\d k_0 \hat{\g}) K^{(1)}_{D_i},
\end{align*}
and
\begin{align*}
    N^{\d k_0}_{D_i D_j} := \hat{K}^{\d k_0}_{D_i D_j} + R^{(0)}_{D_i D_j} + (\d k_0)^2 \log(\d k_0 \hat{\g}) R^{(1)}_{D_i D_j},
\end{align*}
where
\begin{align*}
    K^{(0)}_{D_i}: u\Big|_{D_i} \in L^2(D_i) &\longmapsto \int_{D_i} G(x-y,0)u(y)\upd y \Big|_{D_i} \in L^{2}(D_i),\\
    % \hat{K}^{\d k_0}_{D_i}: u\Big|_{D_i} \in L^2(D_i) &\longmapsto \log(\hat{\g} \d k_0) \hat{K}_{D_i}[u]\Big|_{D_i}\in L^{2}(D_i), \\
    % \hat{K}_{D_i}: u\Big|_{D_i} \in L^2(D_i) &\longmapsto - \frac{1}{2\pi} \int_{D_i} u(y) \upd y \Big|_{D_i} \in L^{2}(D_i),\\
    \hat{K}^{\d k_0}_{D_i}: u\Big|_{D_i} \in L^2(D_i) &\longmapsto - \frac{1}{2\pi} \log(\hat{\g} \d k_0) \int_{D_i} u(y) \upd y \Big|_{D_i}\in L^{2}(D_i), \\
    K^{(1)}_{D_i}: u\Big|_{D_i} \in L^2(D_i) &\longmapsto \int_{D_i} \frac{\partial}{\partial k} G(x-y,k)\Big|_{k=0} u(y) \upd y \Big|_{D_i} \in L^2(D_i),
\end{align*}
and
\begin{align*}
    R^{(0)}_{D_i D_j}: u\Big|_{D_i} \in L^2(D_i) &\longmapsto \int_{D_i} G(x-y,0)u(y)\upd y \Big|_{D_j} \in L^{2}(D_j),\\
    % \hat{K}^{\d k_0}_{D_i D_j}: u\Big|_{D_i} \in L^2(D_i) &\longmapsto \log(\hat{\g} \d k_0) \hat{K}_{D_i D_j}[u]\Big|_{D_j}\in L^{2}(D_j), \\
    % \hat{K}_{D_i D_j}: u\Big|_{D_i} \in L^2(D_i) &\longmapsto - \frac{1}{2\pi} \int_{D_i} u(y) \upd y \Big|_{D_j} \in L^{2}(D_j),\\
    \hat{K}^{\d k_0}_{D_i D_j}: u\Big|_{D_i} \in L^2(D_i) &\longmapsto - \frac{1}{2\pi} \log(\hat{\g} \d k_0) \int_{D_i} u(y) \upd y\Big|_{D_j}\in L^{2}(D_j), \\
    R^{(1)}_{D_i D_j}: u\Big|_{D_i} \in L^2(D_i) &\longmapsto \int_{D_i} \frac{\partial}{\partial k} G(x-y,k)\Big|_{k=0} u(y) \upd y \Big|_{D_j} \in L^2(D_j).
\end{align*}
\end{definition}

\begin{proposition}
For the integral operators $K^{\d k_0}_{D_i}$ and $R^{\d k_0}_{D_i D_j}$, we can write
\begin{align} \label{M}
    K^{\d k_0}_{D_i} = M^{\d k_0}_{D_i} + O\Big(\d^4 \log(\d)\Big), \quad
    \text{and} \quad
    R^{\d k_0}_{D_i D_j} = N^{\d k_0}_{D_i D_j}+ O\Big(\d^4 \log(\d)\Big),
\end{align}
as $\delta\to0$ and with $k_0$ fixed.
\end{proposition}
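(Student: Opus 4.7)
The plan is to derive the claimed expansion by inserting the small-argument expansion of the two-dimensional Green's function $G(x,k) = -\frac{i}{4}H_0^{(1)}(k|x|)$ into the integral kernels defining $K^{\d k_0}_{D_i}$ and $R^{\d k_0}_{D_i D_j}$. Starting from the classical Hankel series
\[
H^{(1)}_0(z) = 1 + \frac{2i}{\pi}\bigl(\log(z/2) + \gamma\bigr) - \frac{z^2}{4} - \frac{i z^2}{2\pi}\bigl(\log(z/2) + \gamma - 1\bigr) + O\bigl(z^4 \log z\bigr),
\]
as $z \to 0$, I would substitute $z = \d k_0 |x-y|$, adopt the abbreviation $\hat\gamma = e^\gamma/2$, and regroup the resulting terms according to their joint dependence on $\d$ and $\log\d$. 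This produces a pointwise expansion of $G(x-y,\d k_0)$ whose three leading pieces have exactly the structure of the three integral kernels appearing in Definition~\ref{MN}: a $\d$-independent logarithm in $\d$, a $\d$-independent Laplace-type kernel, and a coefficient of $(\d k_0)^2\log(\d k_0 \hat\gamma)$ that matches the kernel defining $K^{(1)}_{D_i}$ (or $R^{(1)}_{D_i D_j}$).

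With this pointwise expansion in hand, the next step is to integrate against $u\in L^2(D_i)$ and identify each of the three leading terms with $\hat K^{\d k_0}_{D_i}$, $K^{(0)}_{D_i}$, and $(\d k_0)^2\log(\d k_0\hat\gamma) K^{(1)}_{D_i}$ directly from the definitions, and analogously for the $R$-operator. The non-diagonal case $R^{\d k_0}_{D_i D_j}$ is essentially routine, since $|x-y|$ is uniformly bounded below by the separation $\mathrm{dist}(D_i,D_j)>0$; the Hankel expansion therefore converges uniformly on $\overline{D_j}\times\overline{D_i}$ and the residual kernel can be estimated by its supremum, immediately yielding the $O(\d^4\log\d)$ bound via Cauchy--Schwarz.

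The main obstacle will be the residual bound for $K^{\d k_0}_{D_i}$, where the kernel has a logarithmic singularity on the diagonal $x=y$. My plan is to bound the residual kernel pointwise by
\[
C\,\d^4\,|\log\d|\,\bigl(1 + |x-y|^4\,\bigl|\log|x-y|\bigr|\bigr),
\]
using the explicit tail of the Hankel series together with the splitting $\log(\d k_0|x-y|) = \log(\d k_0) + \log|x-y|$ to separate the $\d$-dependence from the spatial dependence. Since $|x-y|^{2n}\log|x-y|$ is locally integrable in $\mathbb{R}^2$ for every $n\ge 0$, a Schur test that estimates the $L^1$ norm of the residual kernel in each variable then upgrades the pointwise estimate into the stated $L^2(D_i)\to L^2(D_i)$ operator-norm bound of order $\d^4\log\d$. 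The principal technical work is the bookkeeping in this last step: tracking every term produced by the expansion and verifying that nothing of intermediate order, once it has been split between $K^{(0)}_{D_i}$ and the $(\d k_0)^2\log(\d k_0\hat\gamma) K^{(1)}_{D_i}$ piece, leaks into the residual with a worse exponent.
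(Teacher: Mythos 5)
The paper does not actually prove this Proposition; it recalls it from the cited reference \cite{AD} without argument, so there is no in-paper proof to compare against. Your overall strategy --- expand the two-dimensional outgoing Green's function $G(x,k) = -\tfrac{i}{4}H_0^{(1)}(k|x|)$ using the small-argument Hankel series, substitute $z=\delta k_0|x-y|$, regroup the resulting terms by their joint dependence on $\delta$ and $\log\delta$, and then pass to an operator-norm estimate (a uniform-supremum argument for the off-diagonal case, and a Schur-type or $L^1$-kernel argument for the weakly singular diagonal case) --- is the natural approach, and is almost certainly the shape of the argument underlying \cite{AD}.

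That said, there is a genuine gap in your bookkeeping which you flag as the ``principal technical work'' but do not resolve, and it is not merely technical. After subtracting the $\delta$-dependent constant (the $\hat K^{\delta k_0}_{D_i}$ piece), the Laplace kernel (the $K^{(0)}_{D_i}$ piece), and a term proportional to $(\delta k_0)^2\log(\delta k_0\hat\gamma)$, the Hankel expansion of $G(x-y,\delta k_0)$ still contains a genuine $(\delta k_0)^2$ term with \emph{no} accompanying $\log\delta$ factor, whose kernel involves $|x-y|^2$ and $|x-y|^2\log|x-y|$. This leftover term is $O(\delta^2)$, which dominates the claimed $O(\delta^4\log\delta)$. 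It cannot be absorbed into $\hat K^{\delta k_0}_{D_i}$ or $K^{(0)}_{D_i}$ (both independent of $\delta$ apart from a pure additive constant), nor into $(\delta k_0)^2\log(\delta k_0\hat\gamma)K^{(1)}_{D_i}$ (which carries an explicit $\log\delta$ factor multiplying a $\delta$-independent kernel). So the outline, as it stands, yields only an $O(\delta^2)$ remainder for the three-term $M^{\delta k_0}_{D_i}$ as defined. To obtain the stated $O(\delta^4\log\delta)$ you must explain where the pure $(\delta k_0)^2$ contribution goes --- which also forces you to pin down what $\partial_k G(\cdot,k)\big|_{k=0}$ actually means in Definition~\ref{MN}, since this derivative does not exist classically for the two-dimensional Green's function (it behaves like $1/(2\pi k)$ as $k\to 0$); you will need to consult \cite{AD} for the intended precise definition of $K^{(1)}_{D_i}$ and $R^{(1)}_{D_iD_j}$ before the regrouping can be carried out rigorously.
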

Then, the resonance problem is to find $\w\in\mathbb{C}$, such that there exists $(u_1,u_2,\dots,u_N) \in L^2(D_1) \times L^2(D_2) \times \dots \times L^2(D_N)$, $u_i \ne 0$, for $i=1,\dots,N,$ such that
\begin{align}\label{2dscat}
    \begin{pmatrix}
    1-\d^2\w^2\xi(\w,k)M^{\d k_0}_{D_1} & -\d^2\w^2\xi(\w,k) N^{\d k_0}_{D_2 D_1} & \dots & -\d^2\w^2\xi(\w,k) N^{\d k_0}_{D_N D_1} \\
    -\d^2\w^2\xi(\w,k) N^{\d k_0}_{D_1 D_2} & 1-\d^2\w^2\xi(\w,k)M^{\d k_0}_{D_2} & \dots & -\d^2\w^2\xi(\w,k) N^{\d k_0}_{D_N D_2}\\
    \vdots & \vdots & \ddots & \vdots \\
    -\d^2\w^2\xi(\w,k) N^{\d k_0}_{D_1 D_N} & -\d^2\w^2\xi(\w,k) N^{\d k_0}_{D_2 D_N} & \dots & 1-\d^2\w^2\xi(\w,k)M^{\d k_0}_{D_N}
    \end{pmatrix}
    \begin{pmatrix}
    u_1 \\ u_2 \\ \vdots \\ u_N
    \end{pmatrix}
    =
    \begin{pmatrix}
    0 \\ 0 \\ \vdots \\ 0
    \end{pmatrix},
\end{align}

To ease the notation in what follows, let us define a modified version of the modulo function. This is modified to always return strictly positive values (this is important it will be used for matrix indices later). In particular, it is chosen so that $N \lfloor N \rfloor = N$ for any $N\in\mathbb{N}$.

\begin{definition}
Given $N\in\mathbb{N}$, we denote by $\lfloor N \rfloor:\mathbb{N}\to\{1,2,\dots,N\}$ a modified version of the modulo function, \emph{i.e.} the remainder of euclidean division by $N$. In particular, for all $M \in \mathbb{N}$, there exists unique $\t\in\mathbb{Z}^{\geq0}$ and $r\in\mathbb{N}$ with $0<r\leq N$, such that
$$
M = \t \cdot N + r.
$$
Then, we define $M \lfloor N \rfloor$ to be
$$
M \lfloor N \rfloor := r.
$$
\end{definition}

We now wish to make an additional assumption on the dimensions of the nano-particles. This will allow us to prove an approximation for the values of the modes $u_i$ on each particle. The assumption is one of \emph{diluteness}, in the sense that the particles are small relative to the separation distances between them. To capture this, we introduce the parameter $\rho$ to capture the radii of the reference particles $D_1,\dots,D_N$. We define $\rho=\tfrac{1}{2}\max_{i} (\text{diam}(D_i))$ where $\text{diam}(D_i)$ is defined as
\begin{equation}
    \text{diam}(D_i)=\sup\{  |x-y|:x,y\in D_i  \}.
\end{equation}
Then, in the case that $\rho$ is small, we have the following lemma, which will be used later.

\begin{lemma}\label{approx}
For all $i=1,\dots,N$ and for characteristic size $\d$ of the same order as $\rho$, we can write that
\begin{align}\label{approx formula}
    u_i = \langle u,\phi_i^{(\d)} \rangle \phi_i^{(\d)} + O(\r^2),
\end{align}
as $\r\to0$, where $\phi_i^{(\d)}$ denotes the eigenvector associated to the particle $D_i$ of the potential $M_{D_i}^{\d k_0}$ and $\r>0$ denotes the particle size parameter of $D_1,\dots,D_N$. Here, $\d$ and $\r$ are of the same order in the sense that $\d=O(\r)$ and $\r=O(\d)$. In this case, the error term holds uniformly for any small $\d$ and $\r$ in a neighbourhood of 0. 
\end{lemma}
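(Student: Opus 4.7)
The plan is to decompose $u_i$ according to the spectral structure of $M_{D_i}^{\delta k_0}$ and show that the component orthogonal to $\phi_i^{(\delta)}$ is a small remainder. First I would rewrite the $i$-th row of system (\ref{2dscat}) as
$$
(I - \delta^2\omega^2\xi(\omega,k)\, M_{D_i}^{\delta k_0})\, u_i \;=\; \delta^2\omega^2\xi(\omega,k)\sum_{j\neq i} N_{D_j D_i}^{\delta k_0}\, u_j,
$$
and set $u_i = \langle u_i,\phi_i^{(\delta)}\rangle \phi_i^{(\delta)} + v_i$ with $v_i$ in the orthogonal complement of $\phi_i^{(\delta)}$ in $L^2(D_i)$. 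Because $\phi_i^{(\delta)}$ is supported on $D_i$, this pairing coincides with the $\langle u,\phi_i^{(\delta)}\rangle$ appearing in (\ref{approx formula}), so it suffices to show $\|v_i\|_{L^2(D_i)} = O(\rho^2)$.

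Next I would analyse the spectrum of $M_{D_i}^{\delta k_0}$ using Definition~\ref{MN}. The operator $\hat{K}_{D_i}^{\delta k_0}$ is rank one with range the constants on $D_i$ and non-zero eigenvalue of order $|D_i|\,|\log(\delta k_0)| \sim \rho^2|\log\delta|$. The static logarithmic potential $K_{D_i}^{(0)}$ has a log-singular kernel whose rank-one projection onto the constants merges with $\hat{K}_{D_i}^{\delta k_0}$, while its remaining regular part has operator norm $O(\rho^2)$; the term $(\delta k_0)^2\log(\delta k_0\hat\gamma)\,K_{D_i}^{(1)}$ is of strictly smaller order. By a standard perturbation argument, the dominant eigenvalue of $M_{D_i}^{\delta k_0}$ is then simple and well separated from the rest, $\phi_i^{(\delta)}$ is an $O(\rho^2)$ perturbation in $L^2(D_i)$ of the normalised constant $|D_i|^{-1/2}$ on $D_i$, and the restriction of $M_{D_i}^{\delta k_0}$ to the orthogonal complement of $\phi_i^{(\delta)}$ has operator norm $O(\rho^2)$.

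Letting $P_i$ be the Riesz spectral projector onto the eigenline of $\phi_i^{(\delta)}$ (so $P_i$ commutes with $M_{D_i}^{\delta k_0}$), I would apply $I-P_i$ to the rewritten $i$-th equation, which on the range of $I-P_i$ yields
$$
(I - \delta^2\omega^2\xi\, M_{D_i}^{\delta k_0})\, v_i \;=\; \delta^2\omega^2\xi\,(I-P_i)\sum_{j\neq i} N_{D_j D_i}^{\delta k_0}\, u_j.
$$
Since $\delta = O(\rho)$ and the restriction of $M_{D_i}^{\delta k_0}$ to this subspace has norm $O(\rho^2)$, the left-hand operator is invertible with inverse $I + O(\rho^4)$ for small $\rho$. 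For the right-hand side I would Taylor-expand $G(x-y,\delta k_0)$ for $x\in D_i,\ y\in D_j$ around the centres $(z_i,z_j)$: because $|z_i-z_j|$ is bounded below by a fixed constant independent of $\rho$, the Green's function is smooth there and $N_{D_j D_i}^{\delta k_0} u_j$ is constant on $D_i$ up to an $O(\rho)\,\|u_j\|_{L^2(D_j)}$ deviation. The constant part is annihilated by $I-P_i$ up to the $O(\rho^2)$ difference between $\phi_i^{(\delta)}$ and the exact normalised constant, whence $(I-P_i)\sum_{j\neq i} N_{D_j D_i}^{\delta k_0} u_j = O(\rho^2)$. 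Combining with the prefactor $\delta^2\omega^2\xi = O(\rho^2)$ and the uniform boundedness of $\|u_j\|_{L^2(D_j)}$ under the resonance normalisation yields $\|v_i\|_{L^2(D_i)} = O(\rho^2)$.

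The main obstacle I anticipate is the careful bookkeeping of scales: the operator $M_{D_i}^{\delta k_0}$ mixes log-singular with regular contributions, and since $\phi_i^{(\delta)}$ differs from the exact normalised constant only by $O(\rho^2)$, the projector $I-P_i$ does not annihilate constants exactly. Ensuring that all the subleading pieces assemble into an $O(\rho^2)$ remainder (rather than something larger) requires treating the rank-one parts of $\hat{K}_{D_i}^{\delta k_0}$ and $K_{D_i}^{(0)}$ jointly, and verifying that $\|u_j\|_{L^2(D_j)}$ remains uniformly bounded as $\rho\to 0$ via the resonance condition together with a normalisation of the full mode $u$ on $D$.
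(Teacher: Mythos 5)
Your proposal takes a genuinely different route from the paper. The paper does not run a spectral perturbation argument at all: it casts the approximation $u_i \approx \langle u,\phi_i^{(\d)}\rangle\phi_i^{(\d)}$ as a discrete/regular convergence statement and verifies the three hypotheses of Theorem~2.10 of~\cite{BLRM} (convergence of $p_\r$ in norm, uniform boundedness of $\|\mathcal{L}\|_{\sup}$ over compacta in $\w$, and regular convergence of $\mathcal{L}p_\r u$ to the un-approximated system $\mathcal{F}u$). Your approach --- splitting $u_i$ into its component along $\phi_i^{(\d)}$ and an orthogonal remainder $v_i$, and bounding $v_i$ via the resolvent on the complementary spectral subspace --- is a more direct, hands-on strategy. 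If the estimates held, it would have the advantage of producing an explicit rate from first principles rather than invoking an external abstract theorem.

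However, two of the scale estimates on which your argument hinges are not correct, and the error does not appear to be repairable by minor bookkeeping. First, you assert that $\phi_i^{(\d)}$ is an $O(\r^2)$ perturbation in $L^2(D_i)$ of the normalised constant. The paper's own Corollary~\ref{indic} records the weaker (and correct) estimate $\phi_i^{(\d)} = \hat{1}_{D_i} + O\bigl(1/\log\d\bigr)$: the dominant eigenvalue of $M^{\d k_0}_{D_i}$ is of order $\r^2\log\d$ (driven by the rank-one $\hat{K}^{\d k_0}_{D_i}$), while the spectral gap to the rest of the spectrum is only of order $\r^2$ (driven by $K^{(0)}_{D_i}$ on mean-zero functions), so the relative perturbation of the eigenvector is $O(1/\log\d)$, which is far larger than $O(\r^2)$. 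Consequently $(I-P_i)$ fails to annihilate constants by an $O(1/\log\d)$ amount, not $O(\r^2)$. Second, you take $\d^2\w^2\xi(\w,k) = O(\r^2)$, but this is not the regime in which the resonances live. As Section~\ref{sec:frequencies} makes explicit, $\w = O(\d^{-2})$, so $\d^2\w^2 = O(\d^{-2})$; moreover the resonance condition \eqref{eq:B} forces $\mathcal{B}(\w,\d)$, and hence $\d^2\w^2\xi$, to be comparable to $1/\n^{(i)}_\d = O\bigl(1/(\r^2\log\r)\bigr)$, which blows up rather than vanishes as $\r\to0$. With these corrected scalings, the two factors in your final product become $O(\r^2)$ (after a $\log\d$ cancellation between the constant part of $N^{\d k_0}_{D_jD_i}u_j$ and $(I-P_i)1_{D_i}$, which you would still need to exhibit) and $O\bigl(1/(\r^2\log\r)\bigr)$, giving at best $\|v_i\| = O(1/\log\r)$ by your route, not the claimed $O(\r^2)$. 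You would need either a finer cancellation mechanism than what you sketch, or to switch to the abstract framework the paper actually uses, to close this gap.
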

\begin{proof}
We refer to Appendix~\ref{app:lem2.5}.
\end{proof}

We can now state the main result in the two-dimensional case.

\begin{theorem} \label{thm:approximation}
The scattering resonance problem in two dimensions becomes, at leading order as $\d\to0$ and $\r\to0$, with $\d=O(\r)$ and $\r=O(\d)$, finding $\w\in\mathbb{C}$ such that
\begin{align*}
    \det(\mathcal{L}) = 0,
\end{align*}
where the matrix $\mathcal{L}$ is given by
\begin{align}\label{L}
    \mathcal{L}_{ij} = 
    \begin{cases}
    \langle N^{\d k_0}_{D_i D_{i+1 \lfloor N \rfloor }} \phi_i^{(\d)}, \phi^{(\d)}_{i+1\lfloor N \rfloor} \rangle , &\text{ if $i=j$,} \\
    - \mathcal{B}_i(\w,\d) \langle N^{\d k_0}_{D_j D_i} \phi^{(\d)}_j, \phi^{(\d)}_i \rangle \langle N^{\d k_0}_{D_i D_{i+1 \lfloor N \rfloor }} \phi_i^{(\d)}, \phi^{(\d)}_{i+1\lfloor N \rfloor} \rangle, &\text{ if $i \ne j$.}
    \end{cases}
\end{align}
Here, $k_0=\mu_0\varepsilon_0\omega$ and
\begin{align}\label{B_i}
    \mathcal{B}_i(\w,\d) := \frac{\d^2 \w^2 \xi(\w,k)}{1 - \d^2 \w^2 \xi(\w,k) \n^{(i)}_{\d}}, \ \ i=1,2,...,N,
\end{align}
with $\n_{\d}^{(i)}$ and $\phi_i^{(\d)}$ being the eigenvalues and the respective eigenvectors associated to the particle $D_i$ of the potential $M^{\d k_0}_{D_i}$, for $i=1,2,\dots,N$.
\end{theorem}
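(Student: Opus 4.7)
My plan is to reduce the block operator system (\ref{2dscat}) to a scalar linear system for the mode coefficients $c_i := \langle u,\phi_i^{(\delta)}\rangle$, and then to identify the resulting determinant condition with $\det(\mathcal{L})=0$.

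First I would apply Lemma~\ref{approx} to substitute $u_i = c_i\phi_i^{(\delta)} + O(\rho^2)$ into the $i$-th row of (\ref{2dscat}). Exploiting the eigenrelation $M^{\delta k_0}_{D_i}\phi_i^{(\delta)} = \nu_\delta^{(i)}\phi_i^{(\delta)}$, the diagonal block collapses to a scalar multiple of $\phi_i^{(\delta)}$, and the equation, viewed as an identity in $L^2(D_i)$, becomes at leading order
\begin{align*}
c_i\bigl(1-\delta^2\omega^2\xi(\omega,k)\nu_\delta^{(i)}\bigr)\phi_i^{(\delta)} = \delta^2\omega^2\xi(\omega,k)\sum_{j\neq i} c_j\,N^{\delta k_0}_{D_j D_i}\phi_j^{(\delta)},
\end{align*}
where the residual is absorbed into the combined $O(\rho^2)+O(\delta^4\log\delta)$ tail coming from Lemma~\ref{approx} and from (\ref{M}).

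Next I would take the $L^2(D_i)$ inner product of both sides with the normalised $\phi_i^{(\delta)}$ and divide through by the (generically nonzero) prefactor on the left, obtaining the scalar system
\begin{align*}
c_i = \mathcal{B}_i(\omega,\delta)\sum_{j\neq i} c_j\,\bigl\langle N^{\delta k_0}_{D_j D_i}\phi_j^{(\delta)},\phi_i^{(\delta)}\bigr\rangle,
\end{align*}
with $\mathcal{B}_i$ exactly as defined in (\ref{B_i}). This is a homogeneous linear system $B\mathbf{c}=0$ in $\mathbb{C}^N$, whose matrix has entries $B_{ii}=1$ and $B_{ij}=-\mathcal{B}_i\langle N^{\delta k_0}_{D_j D_i}\phi_j^{(\delta)},\phi_i^{(\delta)}\rangle$ for $i\neq j$, and a nontrivial resonant mode $(c_1,\dots,c_N)$ exists precisely when $\det(B)=0$. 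A direct comparison with (\ref{L}) then shows that $\mathcal{L}=DB$, where $D$ is the diagonal matrix with entries $D_{ii}=\langle N^{\delta k_0}_{D_i D_{i+1\lfloor N\rfloor}}\phi_i^{(\delta)},\phi_{i+1\lfloor N\rfloor}^{(\delta)}\rangle$ --- each row of $B$ is simply rescaled by the corresponding $D_{ii}$. Hence $\det(\mathcal{L})=\det(D)\det(B)$, and in the generic regime where none of the $D_{ii}$ vanish the two conditions $\det(B)=0$ and $\det(\mathcal{L})=0$ coincide, yielding the claimed characterisation.

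The step I expect to be most delicate is the asymptotic bookkeeping at the substitution stage. One has to verify that both error sources --- the $O(\rho^2)$ correction furnished by Lemma~\ref{approx} and the $O(\delta^4\log\delta)$ expansion from (\ref{M}) --- are uniformly of strictly lower order than the entries of $\mathcal{L}$ that are retained, so that they do not corrupt the determinant condition. The simultaneous scaling hypothesis $\delta=O(\rho)$ and $\rho=O(\delta)$ is precisely what fuses these two expansions into a single leading-order limit in which the stated determinant condition holds exactly.
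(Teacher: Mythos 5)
Your argument is correct and follows essentially the same reduction as the paper: project the block integral system onto the dominant eigenmode of each diagonal block to obtain a finite-dimensional linear system for $c_i=\langle u,\phi_i^{(\delta)}\rangle$, then read off the determinant condition. The main difference is organisational---the paper first inverts the diagonal blocks via the pole-pencil decomposition of $\bigl(1-\delta^2\omega^2\xi M^{\delta k_0}_{D_i}\bigr)^{-1}$ (neglecting the regular remainder $R_i[\omega]$ as justified in \cite{AD}), then hits each equation with $N^{\delta k_0}_{D_iD_{i+1\lfloor N\rfloor}}$ and projects onto $\phi^{(\delta)}_{i+1\lfloor N\rfloor}$, whereas you insert the mode approximation from Lemma~\ref{approx} first, exploit the eigenrelation to collapse the diagonal block, and project directly onto $\phi_i^{(\delta)}$---and your observation that the paper's extra application of $N^{\delta k_0}_{D_iD_{i+1\lfloor N\rfloor}}$ amounts to a diagonal row rescaling $\mathcal{L}=DB$, so that $\det\mathcal{L}=0$ and $\det B=0$ coincide whenever no $D_{ii}$ vanishes, is a genuinely clarifying remark that the paper leaves implicit. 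The one place you are less explicit than the paper is the division by $1-\delta^2\omega^2\xi(\omega,k)\nu_\delta^{(i)}$: this is exactly the truncation to the pole part of the resolvent, and its legitimacy near resonance is what the paper's citation of \cite{AD} supplies, so it is worth stating that your eigenrelation step and that truncation are the same approximation.
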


\begin{proof}
We observe that the integral formulation (\ref{2dscat}) is equivalent to
\begin{align}\label{2dscat2}
    \begin{pmatrix}
    u_1 \\ u_2 \\ \vdots \\ u_N
    \end{pmatrix} 
    - \d^2 \w^2 \xi(\w,k) \mathbb{M}
    \begin{pmatrix}
    \displaystyle\sum_{j=1,j\ne1}^N N^{\d k_0}_{D_j D_1} u_j \\
    \displaystyle\sum_{j=1,j\ne2}^N N^{\d k_0}_{D_j D_2} u_j \\
    \vdots \\
    \displaystyle\sum_{j=1,j\ne N}^N N^{\d k_0}_{D_j D_N} u_j
    \end{pmatrix} 
    =
    \begin{pmatrix}
    0 \\ 0 \\ \vdots \\ 0
    \end{pmatrix},
\end{align}
where $\mathbb{M}$ is the diagonal matrix given by
\begin{align*}
    \mathbb{M}_{ij} = 
    \begin{cases}
    \Big(1 - \d^2 \w^2 \xi(\w,k) M^{\d k_0}_{D_i} \Big)^{-1}, \ \ &\text{ if } i=j, \\
    0, &\text{ if } i\ne j,
    \end{cases}
\end{align*}
for $i,j = 1,\dots,N$. From the pole-pencil decomposition, for $i=1,2,\dots,N$, we have
\begin{align*}
    \Big(1 - \d^2 \w^2 \xi(\w,k) M^{\d k_0}_{D_i}\Big)^{-1}(\cdot) = \frac{ \langle \cdot, \phi_i^{(\d)} \rangle \phi_i^{(\d)} }{1 - \d^2 \w^2 \xi(\w,k) \n^{(i)}_{\d}} + R_i[\w](\cdot).
\end{align*}
We recall that, from \cite{AD}, the remainder term $R_i[\w](\cdot)$ can be neglected. Thus, (\ref{2dscat2}) gives
\begin{align*}
    \begin{pmatrix}
    u_1 \\ u_2 \\ \vdots \\ u_N
    \end{pmatrix} 
    - \d^2 \w^2 \xi(\w,k) \tilde{\mathbb{M}}
    \begin{pmatrix}
    \displaystyle\sum_{j=1,j\ne1}^N N^{\d k_0}_{D_j D_1} u_j \\
    \displaystyle\sum_{j=1,j\ne2}^N N^{\d k_0}_{D_j D_2} u_j \\
    \vdots \\
    \displaystyle\sum_{j=1,j\ne N}^N N^{\d k_0}_{D_j D_N} u_j
    \end{pmatrix} 
    =
    \begin{pmatrix}
    0 \\ 0 \\ \vdots \\ 0
    \end{pmatrix},
\end{align*}
where $\tilde{\mathbb{M}}$ is the diagonal matrix given by
\begin{align*}
    \tilde{\mathbb{M}}_{ij} = 
    \begin{cases}
    \frac{ \langle \cdot, \phi_i^{(\d)} \rangle \phi_i^{(\d)} }{1 - \d^2 \w^2 \xi(\w,k) \n^{(i)}_{\d}} , \ \ &\text{ if } i=j, \\
    0, &\text{ if } i \ne j.
    \end{cases}
\end{align*}
This is equivalent to the following system
\begin{align}\label{2dscat3}
    u_i - \frac{\d^2 \w^2 \xi(\w,k)}{1 - \d^2 \w^2 \xi(\w,k) \n^{(i)}_{\d}} \displaystyle\sum\limits_{j=1,j \ne i}^N \langle N^{\d k_0}_{D_j D_i} u_j, \phi_i^{(\d)}  \rangle \phi_i^{(\d)}  = 0, \ \ \ \text{for each } i=1,\dots,N.
\end{align}
Then, applying the operator $N^{\d k_0}_{D_i D_{i+1 \lfloor N \rfloor}}$ to \eqref{2dscat3} for each $i$ and taking the product with $\phi^{(\d)}_{i+1 \lfloor N \rfloor}$, gives
\begin{align}\label{2dscat4}
    \langle N^{\d k_0}_{D_i D_{i+1\lfloor N \rfloor}} u_i, \phi_{i+1\lfloor N \rfloor}^{(\d)}  \rangle - \mathcal{B}_i(\w,\d) \displaystyle\sum\limits_{j=1,j \ne i}^N \langle N^{\d k_0}_{D_j D_i} u_j, \phi_i^{(\d)}  \rangle \langle N^{\d k_0}_{D_i D_{i+1\lfloor N \rfloor}} \phi_i^{(\d)}, \phi_{i+1\lfloor N \rfloor}^{(\d)}  \rangle  = 0, %\ \text{ for each } i = 1,\dots,N.
\end{align}
for each $i=1,\dots,N$. We observe that for $j=1,\dots, N$, from Lemma~\ref{approx}, the following approximation formula holds
\begin{align*}
    u_j \simeq \langle u, \phi_j^{(\d)} \rangle \phi_j^{(\d)}.
\end{align*}
Applying this to (\ref{2dscat4}), we get
\begin{align}
    \langle N^{\d k_0}_{D_i D_{i+1\lfloor N \rfloor}} \phi_i^{(\d)}, \phi_{i+1\lfloor N \rfloor}^{(\d)}  \rangle \langle u, \phi_i^{(\d)} \rangle - \mathcal{B}_i(\w,\d) \displaystyle\sum\limits_{j=1,j \ne i}^N \langle N^{\d k_0}_{D_j D_i} \phi_j^{(\d)}, \phi_i^{(\d)}  \rangle \langle N^{\d k_0}_{D_i D_{i+1\lfloor N \rfloor}} \phi_i^{(\d)}, \phi_{i+1\lfloor N \rfloor}^{(\d)}  \rangle \langle u, \phi_j^{(\d)} \rangle  = 0,
\end{align}
for each $i=1,\dots,N$. This system has the matrix representation
\begin{align} 
\mathcal{L}
\begin{pmatrix}
    \langle u, \phi_1^{(\d)} \rangle \\
    \langle u, \phi_2^{(\d)} \rangle \\
    \vdots \\
    \langle u, \phi_N^{(\d)} \rangle
\end{pmatrix}
=
\begin{pmatrix}
    0 \\
    0 \\
    \vdots \\
    0
\end{pmatrix},
\end{align}
where $\mathcal{L}$ is given by (\ref{L}), which is the desired result.
\end{proof}

\begin{corollary}\label{indic}
For $i=1,\dots,N$, we can write
$$
\phi_i^{(\d)} = \hat{1}_{D_i},
$$
where $\hat{1}_{D_i} = \frac{1_{D_i}}{\sqrt{|D_i|}}$ and $|D_i|$ is used to denote the volume of $D_i$. This also implies that for $1\leq i,j \leq N$,
\begin{align} \label{2dscat6}
    \mathcal{L}_{ij} = 
    \begin{cases}
    \langle N^{\d k_0}_{D_i D_{i+1\lfloor N \rfloor}} \hat{1}_{D_i}, \hat{1}_{D_{i+1\lfloor N \rfloor}} \rangle, \ \ \ &\text{ if } j=i, \\
    - \mathcal{B}_i(\w,\d) \langle N^{\d k_0}_{D_i D_{i+1\lfloor N \rfloor}} \hat{1}_{D_i}, \hat{1}_{D_{i+1\lfloor N \rfloor}} \rangle^2, &\text{ if } j=i+1\lfloor N \rfloor, \\
    - \mathcal{B}_i(\w,\d) \langle N^{\d k_0}_{D_i D_j} \hat{1}_{D_i}, \hat{1}_{D_j} \rangle \langle N^{\d k_0}_{D_i D_{i+1 \lfloor N \rfloor }} \hat{1}_{D_i}, \hat{1}_{D_{i+1\lfloor N \rfloor}} \rangle, &\text{ otherwise}.
    \end{cases}
\end{align}
%The matrix representation of the resonance problem in two dimensions can be rewritten as
%\begin{align*}
%{\scriptsize
%    \begin{pmatrix}
%    \langle N^{\d k_0}_{D_1 D_2}  \hat{1}_{D_1},  \hat{1}_{D_2}  \rangle & - \mathcal{B}_1  \langle N^{\d k_0}_{D_1 D_2}  \hat{1}_{D_1},  \hat{1}_{D_2}  \rangle^2 & \dots & - \mathcal{B}_1 \langle N^{\d k_0}_{D_1 D_N}  \hat{1}_{D_1},  \hat{1}_{D_N}  \rangle \langle N^{\d k_0}_{D_1 D_2}  \hat{1}_{D_1},  \hat{1}_{D_2}  \rangle \\ 
%    - \mathcal{B}_2 \langle N^{\d k_0}_{D_1 D_2}  \hat{1}_{D_1},  \hat{1}_{D_2}  \rangle \langle N^{\d k_0}_{D_2 D_3}  \hat{1}_{D_2},  \hat{1}_{D_3}  \rangle & \langle N^{\d k_0}_{D_2 D_3}  \hat{1}_{D_2},  \hat{1}_{D_3}  \rangle & \dots &  - \mathcal{B}_2 \langle N^{\d k_0}_{D_2 D_N}  \hat{1}_{D_2},  \hat{1}_{D_N}  \rangle \langle N^{\d k_0}_{D_2 D_3}  \hat{1}_{D_2},  \hat{1}_{D_3}  \rangle \\
%    \vdots & \vdots & \ddots & \vdots \\
%    - \mathcal{B}_N \langle N^{\d k_0}_{D_1 D_N}  \hat{1}_{D_1},  \hat{1}_{D_N}  \rangle^2 & - \mathcal{B}_N \langle N^{\d k_0}_{D_2 D_N}  \hat{1}_{D_2},  \hat{1}_{D_N}  \rangle \langle N^{\d k_0}_{D_1 D_N}  \hat{1}_{D_1},  \hat{1}_{D_N}  \rangle & \dots &  \langle N^{\d k_0}_{D_1 D_N}  \hat{1}_{D_1},  \hat{1}_{D_N}  \rangle
%    \end{pmatrix} 
%}
%\end{align*}
%\begin{align} \label{2dscat6}
%\cdot 
%\begin{pmatrix}
%    \langle u, \hat{1}_{D_1} \rangle \\
%    \langle u, \hat{1}_{D_2} \rangle \\
%    \vdots \\
%    \langle u, \hat{1}_{D_N} \rangle
%\end{pmatrix}
%=
%\begin{pmatrix}
%    0 \\
%    0 \\
%    \vdots \\
%    0
%\end{pmatrix}.
%\end{align}
\end{corollary}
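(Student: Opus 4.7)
The plan is to establish Corollary~\ref{indic} in two stages: first, identify the dominant eigenvector $\phi_i^{(\d)}$ of $M^{\d k_0}_{D_i}$ with the normalised indicator $\hat 1_{D_i}$, and then substitute into the formula (\ref{L}) to obtain (\ref{2dscat6}).

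For the first stage, I would inspect the decomposition of $M^{\d k_0}_{D_i}$ given in Definition~\ref{MN}. The piece $\hat K^{\d k_0}_{D_i}$ has rank one: its range is the span of $1_{D_i}$, and its unique nonzero eigenvalue is $-\tfrac{|D_i|}{2\pi}\log(\hat\g\d k_0)$ with associated eigenvector $\hat 1_{D_i}$. The remaining pieces $K^{(0)}_{D_i}$ and $(\d k_0)^2\log(\d k_0\hat\g)\,K^{(1)}_{D_i}$ have operator norms that are, respectively, $O(1)$ and $O(\d^2|\log\d|)$, so both are $o(|\log\d|)$ whereas the rank-one contribution has norm of order $|\log\d|\to\infty$ as $\d\to0$. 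A standard rank-one perturbation argument then identifies the dominant eigenvector of $M^{\d k_0}_{D_i}$ with $\hat 1_{D_i}$ at leading order. This is the eigenvector relevant to the resonance condition, since the corresponding large eigenvalue $\n_\d^{(i)}$ is precisely the one which drives the pole of $\mathcal{B}_i(\w,\d)$ from (\ref{B_i}).

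For the second stage, I would substitute $\phi_i^{(\d)}=\hat 1_{D_i}$ directly into the expressions defining $\mathcal{L}$ in (\ref{L}). The diagonal entries immediately give the first case of (\ref{2dscat6}). For $i\ne j$, the general entry takes the form
\[
\mathcal{L}_{ij} = -\mathcal{B}_i(\w,\d)\,\langle N^{\d k_0}_{D_j D_i}\hat 1_{D_j},\hat 1_{D_i}\rangle\,\langle N^{\d k_0}_{D_i D_{i+1\lfloor N\rfloor}}\hat 1_{D_i},\hat 1_{D_{i+1\lfloor N\rfloor}}\rangle.
\]
I would then invoke the reciprocity identity
\[
\langle N^{\d k_0}_{D_a D_b}\hat 1_{D_a},\hat 1_{D_b}\rangle = \langle N^{\d k_0}_{D_b D_a}\hat 1_{D_b},\hat 1_{D_a}\rangle,
\]
which follows from Fubini's theorem together with the symmetry $G(x-y,k)=G(y-x,k)$ inherited by each constituent of $N^{\d k_0}$ in Definition~\ref{MN}. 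This converts the first inner product into $\langle N^{\d k_0}_{D_i D_j}\hat 1_{D_i},\hat 1_{D_j}\rangle$ and yields the third (``otherwise'') case of (\ref{2dscat6}). In the special subcase $j=i+1\lfloor N\rfloor$ the two factors coincide, collapsing into the squared expression of the second case.

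The main obstacle is the leading-order eigenvector identification in the first stage: one needs to justify not only that the perturbing terms are genuinely subdominant, but also that this dominance selects the correct branch of eigenvectors (rather than, say, an eigenvector in the kernel of $\hat K^{\d k_0}_{D_i}$, which bifurcates from the spectrum of the lower-order pieces). Once this identification is accepted, the second stage is essentially algebraic substitution combined with the reciprocity of the Green's function, and it proceeds without difficulty.
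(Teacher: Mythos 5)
Your proposal is correct and follows essentially the same route as the paper: identify $\phi_i^{(\d)}$ with $\hat 1_{D_i}$ at leading order from the asymptotic structure of $M^{\d k_0}_{D_i}$, substitute into (\ref{L}), and apply the reciprocity/symmetry identity $\langle N^{\d k_0}_{D_i D_j}\hat 1_{D_i},\hat 1_{D_j}\rangle = \langle N^{\d k_0}_{D_j D_i}\hat 1_{D_j},\hat 1_{D_i}\rangle$ to rewrite the off-diagonal factor. You spell out the eigenvector identification (via the rank-one dominance of $\hat K^{\d k_0}_{D_i}$) in more detail than the paper, which merely asserts $\phi_i^{(\d)}=\hat 1_{D_i}+O(1/\log\d)$ by reference to the expansion, but the underlying argument is the same.
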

\begin{proof}
    We have that the eigenvectors $\phi_i^{(\d)}$, $i=1,2,...,N$, given the asymptotic expansion of the operator $M^{\d k_0}_{D_i}$, can be approximated $\phi_i^{(\d)} = \hat{1}_{D_i} + O(\frac{1}{\log(\d)})$, where $\hat{1}_{D_i} = \frac{1_{D_i}}{\sqrt{|D_i|}}$. Then, we can directly see the symmetry argument
    $$
    \langle N^{\d k_0}_{D_i D_j} \hat{1}_{D_i}, \hat{1}_{D_j}  \rangle = \langle N^{\d k_0}_{D_j D_i} \hat{1}_{D_j}, \hat{1}_{D_i} \rangle.
    $$
    This implies that
    $$
    \mathcal{L}_{i,i+1\lfloor N \rfloor} = - \mathcal{B}_i(\w,\d)  \langle N^{\d k_0}_{D_i D_j} \hat{1}_{D_i}, \hat{1}_{D_{i+1}\lfloor N \rfloor} \rangle^2,
    $$
    which gives the desired result.
\end{proof}

\section{Computation of the coupled resonant frequencies} \label{sec:frequencies}

In Theorem~\ref{thm:approximation}, we have derived an asymptotic formula for the resonant frequencies. This amounts to finding the $\w$ such that $\det(\mathcal{L}(\omega)) = 0$. In this section, we will show how to use this asymptotic formula to calculate the resonant frequencies for physical examples. This calculation is not straightforward, since in the integral operators have highly non-linear dependence on $\omega$. However, an explicit formula can be derived under an additional assumption. Furthermore, Muller's method can be used to find the the frequencies for which the coefficient matrix is singular, given appropriate initial guesses.

\subsection{Example: Three circular resonators}

\begin{figure}
\begin{center}
\includegraphics[scale=0.6]{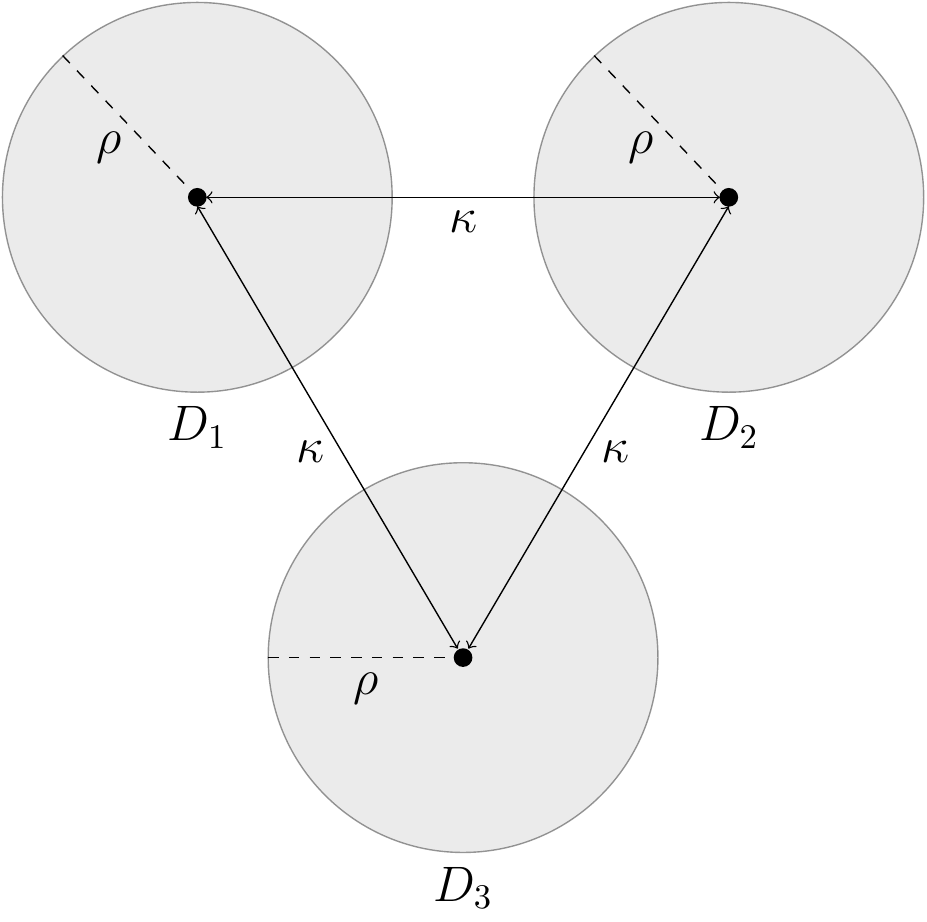}  
\end{center}
\caption{A system of three identical circular resonators can be modelled concisely using our asymptotic method. We study halide perovskite resontators $D_1, D_2$ and $D_3$ of radius $\rho$, made from the same material, with centers placed at a distance $\k$ from each other. } \label{fig:threepart}
\end{figure}

Let us consider the case of having three identical circular halide perovskite resonators $D_1, D_2$ and $D_3$. We will assume that the particles are placed at the same distance $\k$ from each other. This geometry is sketched in Figure~\ref{fig:threepart} and will serve as a suitable example to demonstrate our method. In order to ease the notation, let us write
\begin{align*}
    N_{12}(\w,\d) := \langle N^{\d k_0}_{D_1 D_2} \hat{1}_{D_1},\hat{1}_{D_2} \rangle , \ \ \ \ \
    N_{23}(\w,\d) := \langle N^{\d k_0}_{D_2 D_3} \hat{1}_{D_2},\hat{1}_{D_3} \rangle , \ \ \ \ \
    N_{31}(\w,\d) := \langle N^{\d k_0}_{D_3 D_1} \hat{1}_{D_3},\hat{1}_{D_1} \rangle.
\end{align*}

In order to accelerate the numerical computations and facilitate explicit analytic results, we will make an additional assumption. This assumption is that $N_{ij}(\omega,\delta)$ has no a priori dependence on the frequency $\w$. This is justified in the specific case of halide perovskite nano-particles since $\ve_0$ is of the same magnitude as the characteristic size $\d$. Further, since we are working with the frequencies of the visible light, it holds that $\w$ is of the same magnitude as $\d^{-2}$. Thus, it is reasonable to assume that $\delta k_0$ is constant with respect to $\omega$. Since the dependence of $N_{ij}$ on $\omega$ always takes this form, we can assume it to be approximately independent of $\omega$. We will make this assumption for the results presented in this subsection, and it will be of great importance in studying the inverse design problem in the following section. We write $N_{12}(\w,\d)=N_{12}(\d), N_{23}(\w,\d)=N_{23}(\d)$ and $N_{31}(\w,\d)=N_{31}(\d)$.
Also, since the resonators are identical, it means that they are made from the same material and have the same symmetry. As a result, it holds that $\mathcal{B}_1(\w,\d) = \mathcal{B}_2(\w,\d) = \mathcal{B}_3(\w,\d) =: \mathcal{B}(\w,\d)$. Thus, the matrix $\mathcal{L}$ can be rewritten as
\begin{align}
    \mathcal{L} = 
    \begin{pmatrix}
        N_{12}(\d) & - \mathcal{B}(\w,\d) N_{12}(\d)^2 & - \mathcal{B}(\w,\d)N_{12}(\d)N_{31}(\d)\\
        - \mathcal{B}(\w,\d)N_{12}(\d)N_{23}(\d) & N_{23}(\d) & - \mathcal{B}(\w,\d) N_{23}(\d)^2 \\
        - \mathcal{B}(\w,\d)N_{31}(\d)^2 & - \mathcal{B}(\w,\d)N_{23}(\d)N_{31}(\d) & N_{31}(\d) \\
    \end{pmatrix}.
\end{align}
Then, seeking $\omega$ such that $\det(\mathcal{L}) = 0$, gives that
\begin{align} \label{eq:B}
    2 N_{12}(\d) N_{23}(\d) N_{31}(\d) \mathcal{B}(\w,\d)^3 + \Big(N_{12}(\d)^2+N_{23}(\d)^2+N_{31}(\d)^2\Big) \mathcal{B}(\w,\d)^2 - 1 = 0.
\end{align}
We solve \eqref{eq:B} for $\mathcal{B}(\w,\d)$ and denote the three solutions by $\mathbb{B}_i$, for $i=1,2,3$. Then, solving for $\w\in\mathbb{C}$ in \eqref{B_i}, we have
\begin{align*}
    \Big[ \m_0\a\d^2 + \mathbb{B}_i + \mathbb{B}_i\m_0\a\d^2\n(\d) \Big]\w^2 + i\mathbb{B}_i\g\w - \mathbb{B}_i\b - \mathbb{B}_i \h k^2 = 0,
\end{align*}
from which we obtain
\begin{align}
    \w_i = \frac{-i\mathbb{B}_i\g \pm \sqrt{-\mathbb{B}_i^2\g^2 + 4\Big(\mathbb{B}_i\b+\mathbb{B}_i\h k^2\Big)\Big(\m_0\a\d^2 + \mathbb{B}_i + \mathbb{B}_i\m_0\a\d^2\n(\d)\Big)}}{2\Big(\m_0\a\d^2 + \mathbb{B}_i + \mathbb{B}_i\m_0\a\d^2\n(\d)\Big)}, \ \ \ \ \ \ \ i=1,2,3.
\end{align}

\begin{figure}
\centering
\includegraphics[width=0.8\linewidth]{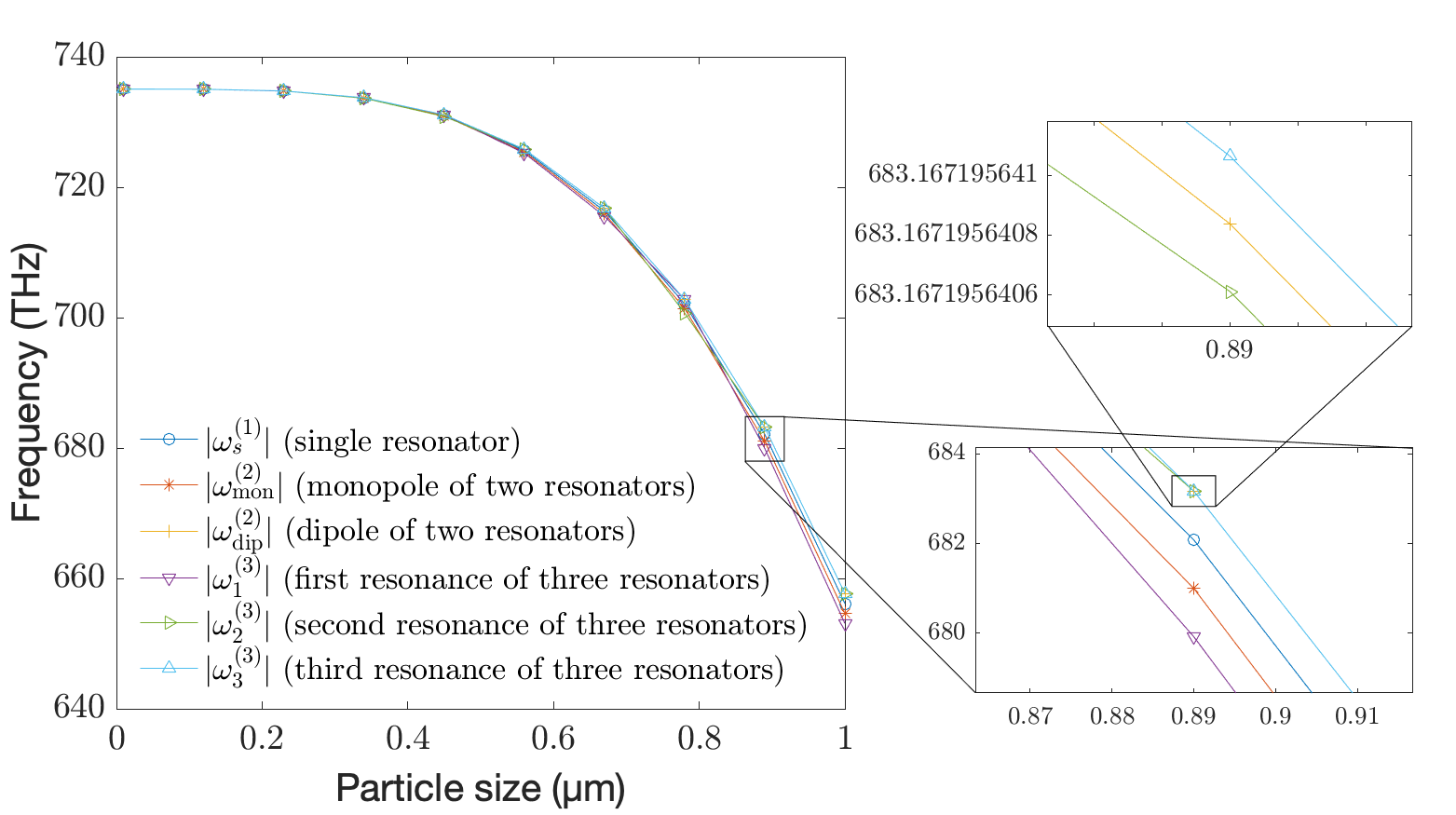}
\caption{Behaviour of the subwavelength resonances for small circular nano-particles of radius $\d$. For three circular methylammonium lead chloride nano-particles, we see how the hybridization causes the frequencies $\w_1^{(3)}, w_2^{(3)}$ and $\w_3^{(3)}$ to shift, relative to the uncoupled resonant frequency of a single particle. We compare them with the hybridized frequencies $\w^{(2)}_\mathrm{mon}, \w^{(2)}_\mathrm{dip}$ of the two circular particle case and the resonant frequency $\w^{(1)}_s$ of the single particle. All the resonators are identical, in the sense that they are the same size and made from the same material (methylammonium lead chloride).} \label{fig:numerics}
\end{figure}

It is helpful to illustrate these results by comparing the case of three resonators to one- and two-particle systems. We plot all these frequencies as a function of the particle size in Figure~\ref{fig:numerics}. The resonant frequency for one particle is denoted by $\w_s^{(1)}$ and the subwavelength frequencies for the case of two particles will be denoted by $\w^{(2)}_{\text{mon}}$ and $\w_{\text{dip}}^{(2)}$. These systems were explored in detail in \cite{AD}, where it was shown that that $\w^{(2)}_{\text{mon}}<\w^{(1)}_s<\w^{(2)}_{\text{dip}}$ as a result of the hybridization. For the case of three particles, we denote the frequencies by $\w_1^{(3)}, \w_2^{(3)}$ and $\w_3^{(3)}$, and we observe that there is also an ordering between them $\w_1^{(3)}<\w_2^{(3)}<\w_3^{(3)}$. Parameter values are chosen to corresponding to methylammonium lead chloride ($\text{MAPbCl}_3$), which is a popular halide perovskite \cite{MFTHBPZK}. We notice that the resonant frequencies for these resonators lies in the range of visible frequencies, when the particles are hundreds of nanometres in size. This puts the system in the appropriate subwavelength regime that was required for our asymptotic method. As $\d\rightarrow0$, the frequencies of the different cases converge to $\w_s^{(1)}$. This is because the nano-particles behave as isolated, identical resonators when $\d$ is very small. Then, as $\d$ increases, we observe that there is a separation between the frequencies of the two particle and three particle case $\w_1^{(3)}<\w^{(2)}_{\text{mon}}<\w_2^{(3)}<\w^{(2)}_{\text{dip}}<\w_3^{(3)}$. In Figures~\ref{fig:numerics}(b) and \ref{fig:numerics}(c), we can see more clearly this separation. This is the effect of the hybridization on the system of resonators.

\section{Inverse design} \label{sec:inverse}

In this section, we will use our asymptotic results to tackle an inverse design problem. Let us assume that we are given three identical two-dimensional circular halide perovskite resonators $D_1,D_2$ and $D_3$ of radius $\r\in\mathbb{R}_{>0}$ and three frequencies $\w_1,\w_2,\w_3\in\mathbb{C}$. Again, we will assume that we are working with nano-particles and that the frequencies given are of the visible light, and so of order $\d^{-2}$. We want to find the appropriate geometry such that the system of three particles resonates at $\w_1$, $\w_2$ and $\w_3$. This toy problem is inspired by human vision, which is sensitive to three different colours, and the desire to design systems capable of giving colour perception to bioinspired artificial eyes made from halide perovskites \cite{GPLLZZSQKJ, lee2018bioinspired}.

\begin{figure}
\begin{center}
\includegraphics[scale=0.6]{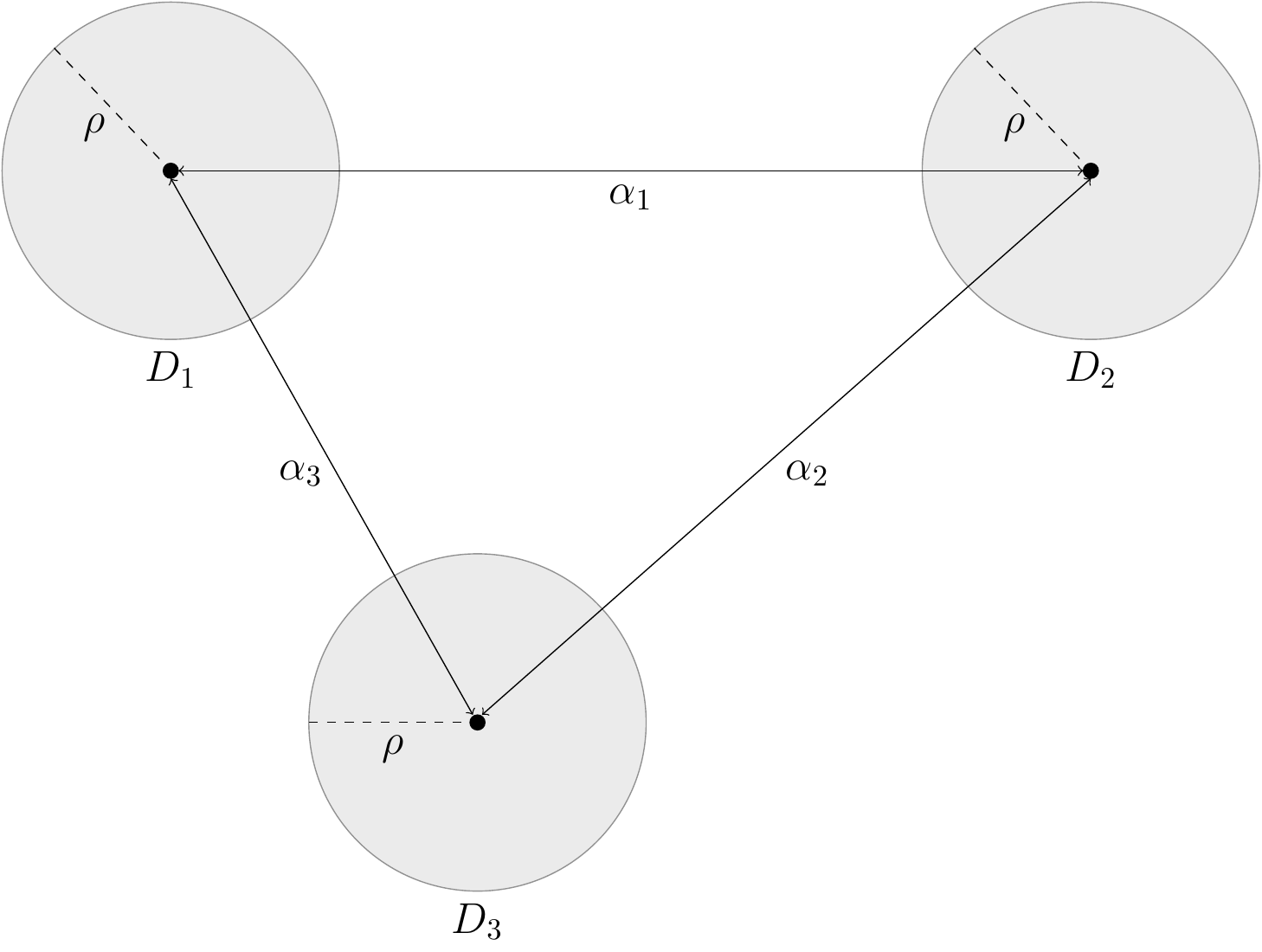}  
\end{center}
\caption{Our asymptotic results can be used to design a system of three identical resonators with specific resonant frequencies. We study a system of three identical circular halide perovskite resontators $D_1, D_2$ and $D_3$, each with radius $\rho$, with centers placed at distances $\mathrm{dist}(D_1,D_2)=\a_1, \mathrm{dist}(D_2,D_3)=\a_2$ and $\mathrm{dist}(D_1,D_3)=\a_3$. } \label{fig:distancesketch}
\end{figure}

Let us denote the separation distances as
\begin{align*}
    \a_1 = \mathrm{dist}(D_1,D_2), \ \ \ 
    \a_2 = \mathrm{dist}(D_2,D_3), \ \ \ 
    \a_3 = \mathrm{dist}(D_1,D_3).
\end{align*}
The configuration is sketched in Figure~\ref{fig:distancesketch}. Then, our problem is finding $\a_1,\a_2,\a_3\in\mathbb{R}$, such that
\begin{align*}
    \det(\mathcal{L})(\w_1,\d) = \det(\mathcal{L})(\w_2,\d) = \det(\mathcal{L})(\w_3,\d) = 0,
\end{align*}
where $\mathcal{L}$ is the coefficient matrix given by (\ref{2dscat6}). This translates into finding $(\a_1,\a_2,\a_3)\in\mathbb{R}^3$, such that
\begin{equation}\label{d-system1}
    \left\{
    \begin{aligned}
    & 2 \mathcal{B}(\w_1,\d)^3 N_{12}(\d) N_{23}(\d) N_{13}(\d) + \mathcal{B}(\w_1,\d)^2 \Big(N_{12}(\d)^2 + N_{23}(\d)^2 + N_{13}(\d)^2\Big) - 1 = 0, \\
    & 2 \mathcal{B}(\w_2,\d)^3 N_{12}(\d) N_{23}(\d) N_{13}(\d) + \mathcal{B}(\w_2,\d)^2 \Big(N_{12}(\d)^2 + N_{23}(\d)^2 + N_{13}(\d)^2\Big) - 1 = 0, \\
    & 2 \mathcal{B}(\w_3,\d)^3 N_{12}(\d) N_{23}(\d) N_{13}(\d) + \mathcal{B}(\w_3,\d)^2 \Big(N_{12}(\d)^2 + N_{23}(\d)^2 + N_{13}(\d)^2\Big) - 1 = 0. 
    \end{aligned}
    \right.
\end{equation}

Our design strategy will have two steps. First, we will find the appropriate characteristic size in order for \eqref{d-system1} to admit a solution. Then, we derive the condition on the separation distances that is required to give the desired resonant frequencies.

% \subsection{Linearity of $N^{\d k_0}_{D_i D_j}$}
\subsection{Linearity of off-diagonal entries}

In order to handle \eqref{d-system1}, it will be helpful to establish how the coefficients $N_{ij}$ depend on the distances between the particles, in the case that $\delta$ is small. Let us first show the following lemma which we will use later and is a consequence of working with small, circular particles.

\begin{lemma}\label{taking distance out}
    Let $d\in\mathbb{R}$ be fixed and $A\in\mathbb{C}$ be given by $A =  R \cos(t) + iR\sin(t)$, where $R,t\in\mathbb{R}$. Then, as $R\rightarrow0$, we have that
    $$
    |A + d| = |A| + d + O(R).
    $$
\end{lemma}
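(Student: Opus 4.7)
The plan is to reduce this to a direct Taylor expansion of a square root. First I would simply unpack $A+d$ into its real and imaginary parts: writing $A = R\cos t + iR\sin t$ with $d\in\mathbb{R}$ gives
\begin{equation*}
|A+d|^{2} = (R\cos t + d)^{2} + (R\sin t)^{2} = d^{2} + 2Rd\cos t + R^{2}.
\end{equation*}
This is the essential identity: all the cross terms in the imaginary part have disappeared and we are left with a clean quadratic polynomial in $R$.

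Next, since the application in the paper concerns separation distances between particles, I would work under the implicit assumption that $d$ is a fixed nonzero real number, so that for $R$ sufficiently small the expression under the square root is comparable to $d^{2}$. Factoring $d^{2}$ out and using the standard expansion $\sqrt{1+x} = 1 + \tfrac{x}{2} + O(x^{2})$ for $x \to 0$, I obtain
\begin{equation*}
|A+d| = |d|\sqrt{\,1 + \tfrac{2R\cos t}{d} + \tfrac{R^{2}}{d^{2}}\,} = |d| + R\cos t\cdot\mathrm{sgn}(d) + O(R^{2}),
\end{equation*}
uniformly in $t \in \mathbb{R}$ since $|\cos t|\le 1$. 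In the regime of interest, where $d>0$, this simplifies to $|A+d| = d + R\cos t + O(R^{2})$.

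Finally, I would combine this with the trivial observation $|A| = |R| = R$ (for $R>0$) to get
\begin{equation*}
|A+d| - \bigl(|A| + d\bigr) = R\cos t - R + O(R^{2}) = R(\cos t - 1) + O(R^{2}) = O(R),
\end{equation*}
which is the claimed estimate. There is no real obstacle here; the only mild subtlety is ensuring the binomial expansion of the square root is valid, which requires $d$ to be bounded away from zero and $R$ to be small enough (both of which are implicit in the statement ``$d$ fixed, $R\to 0$''). The uniformity in the angle $t$ comes for free from the boundedness of $\cos t$, which will be important when this lemma is applied to the distance functions $N_{ij}(\delta)$ appearing in the inverse design system~\eqref{d-system1}.
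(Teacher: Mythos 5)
Your proof is correct and, at its core, follows the same route as the paper's: both start from the identity $|A+d|^2 = d^2 + 2Rd\cos t + R^2$. The paper then simply compares this with $(|A|+d)^2 = d^2 + 2d|R| + R^2$ and notes the difference is $O(R)$, whereas you carry out the explicit Taylor expansion of the square root; the two presentations are equivalent, and yours is, if anything, slightly more careful about the step of passing from an $O(R)$ difference of squares to an $O(R)$ difference of the quantities themselves (which requires $d\neq 0$, as you flag).
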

\begin{proof}
    We observe that
    \begin{align*}
        |A+d|^2 &= |R \cos(t) + iR\sin(t) + d|^2 \\ 
        &= R^2 \cos^2(t) + 2 d R \cos(t) + d^2 + R^2 sin^2(t)
    \end{align*}
    and
    \begin{align*}
        \Big( |A|+d \Big)^2 &= R^2 \cos^2(t) + R^2\sin^2(t) + 2 d |R \cos(t) + i R \sin(t)| + d^2. 
    \end{align*}
    Hence, as $R\rightarrow0$,
    \begin{align*}
        |A+d|^2 = \Big( |A|+d \Big)^2 + O(R),
    \end{align*}
    which gives the desired result.
\end{proof}

We will now state a fundamental result which contributes a lot to the analysis of the system.

\begin{theorem}\label{linearity of N}
    There exists $\mathbb{S}=\mathbb{S}(\delta),\mathbb{Q}=\mathbb{Q}(\delta)\in\mathbb{C}$, such that as $\delta\to0$
    \begin{align*}
        N_{ij}(\d) = \mathbb{S} + \mathbb{Q} \mathrm{dist}(D_i,D_j) + O(\d^4), \quad i,j=1,\dots,N,
    \end{align*}
    where $\mathrm{dist}(D_i,D_j)$ denotes the distance between the unscaled particles $D_i$ and $D_j$, which does not depend on $\delta$.
\end{theorem}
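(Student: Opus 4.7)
The strategy is to use the three-term decomposition of $N^{\d k_0}_{D_i D_j}$ from Definition~\ref{MN} and evaluate $\langle N^{\d k_0}_{D_i D_j}\hat{1}_{D_i},\hat{1}_{D_j}\rangle$ piece by piece, collecting the pair-independent prefactors into $\mathbb{S}(\d)$ and the coefficient of the separation-linear term into $\mathbb{Q}(\d)$.

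The simplest piece is $\hat{K}^{\d k_0}_{D_i D_j}$, whose kernel is the constant $-\frac{1}{2\pi}\log(\hat{\g}\d k_0)$ and so gives
\begin{equation*}
    \langle \hat{K}^{\d k_0}_{D_i D_j}\hat{1}_{D_i},\hat{1}_{D_j}\rangle = -\frac{\sqrt{|D_i|\,|D_j|}}{2\pi}\log(\hat{\g}\d k_0).
\end{equation*}
Since the particles are identical discs, this is the same for every pair $(i,j)$ and contributes entirely to $\mathbb{S}(\d)$.

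For the remaining two terms, both kernels depend on the pair only through $|x-y|$ with $x\in D_j$ and $y\in D_i$. The key step is to expand this distance: writing $x=c_j+\tilde x$ and $y=c_i+\tilde y$ with $|\tilde x|,|\tilde y|\le\r$, an application of Lemma~\ref{taking distance out} gives $|x-y|=\mathrm{dist}(D_i,D_j)+L(\tilde x,\tilde y)+O(\r^2)$, where $L$ is linear in the local coordinates. Substituting into the kernels of $R^{(0)}_{D_i D_j}$ and $R^{(1)}_{D_i D_j}$ and Taylor-expanding, the odd-order local terms are killed by the circular symmetry of the discs upon integration against $\hat{1}_{D_i}\otimes\hat{1}_{D_j}$. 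What survives is an affine function of $\mathrm{dist}(D_i,D_j)$ whose coefficients depend only on $\d$ and $\r$, not on $i$ or $j$; these supply the remaining contributions to $\mathbb{S}(\d)$ and $\mathbb{Q}(\d)$.

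Error control comes from two sources. First, the $(\d k_0)^2\log(\d k_0\hat{\g})$ prefactor in front of $R^{(1)}_{D_i D_j}$ suppresses any Taylor tail from that term by a factor of $O(\d^2\log\d)$. Second, the $O(\r^2)$ remainder in $|x-y|$, combined with the smallness of the relevant expansion coefficients, produces a residual which is $O(\d^4)$ under the assumption that $\r$ and $\d$ are of the same order. The main obstacle is precisely this last step: verifying that after integrating the Taylor-expanded kernels against $\hat{1}_{D_i}\otimes\hat{1}_{D_j}$, every non-affine contribution to the $\mathrm{dist}(D_i,D_j)$ dependence can indeed be absorbed into the $O(\d^4)$ error. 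This requires carefully combining the disc symmetry (to annihilate odd local perturbations), the smallness of $\r$ relative to the inter-particle separations, and the $\d$-smallness of the prefactors so that no stray higher-order dependence on $\mathrm{dist}(D_i,D_j)$ survives above the error threshold.
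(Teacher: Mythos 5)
Your decomposition of $N_{ij}(\delta)$ into the $\hat{K}$, $R^{(0)}$ and $R^{(1)}$ pieces and the observation that the $\hat{K}$ piece is a pair-independent constant match the paper exactly, and using Lemma~\ref{taking distance out} to peel off $\mathrm{dist}(D_i,D_j)$ inside the modulus is the right tool. But there is a genuine gap in the middle of your argument: you never explain why, after this substitution, the dependence on $\mathrm{dist}(D_i,D_j)$ comes out \emph{affine}. If you simply Taylor-expand $\log|x-y|$ and $1/|x-y|$ in the local variables $\tilde x,\tilde y$ around the separation, the leading terms are $\log\big(\mathrm{dist}(D_i,D_j)\big)$ and $1/\mathrm{dist}(D_i,D_j)$, which are nonlinear in the distance and cannot be absorbed into $\mathbb{S}+\mathbb{Q}\,\mathrm{dist}(D_i,D_j)$. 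Symmetry of the discs is irrelevant here: it affects the local integrals but cannot turn $\log(\mathrm{dist})$ into a linear function of $\mathrm{dist}$. What the paper actually does is linearize the \emph{kernel functions themselves} — it Taylor-expands $\log(z)$ and $1/z$ to first order (around $z=1$), so that $\log|x-y|\simeq |x-y|-1$ and $1/|x-y|\simeq 2-|x-y|$, and only then does the $\mathrm{dist}(D_i,D_j)$ produced by Lemma~\ref{taking distance out} appear linearly. That kernel linearization is the step that makes the claim true, and it is absent from your proposal.

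Two smaller inaccuracies: you claim Lemma~\ref{taking distance out} yields $|x-y|=\mathrm{dist}(D_i,D_j)+L(\tilde x,\tilde y)+O(\rho^2)$ with $L$ \emph{linear}. In fact the lemma gives $|x-y|=|\tilde x-\tilde y|+\mathrm{dist}(D_i,D_j)+O(\rho)$, and $|\tilde x-\tilde y|$ is not linear in the local coordinates; also the remainder in the lemma as stated is $O(\rho)$, not $O(\rho^2)$. And the symmetry-annihilation of odd terms that you invoke is not used in the paper's argument — once the kernel has been linearized, the non-distance part of the local integral simply contributes a pair-independent constant to $\mathbb{S}(\delta)$, without any parity cancellation being needed.
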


\begin{proof}
Let us recall that
\begin{align*}
N_{ij}(\d) &= \langle N^{\d k_0}_{D_i D_j} \hat{1}_{D_i}, \hat{1}_{D_j} \rangle \\
&= \langle \hat{K}^{\d k_0}_{D_i D_j} \hat{1}_{D_i}, \hat{1}_{D_j} \rangle + \langle R^{(0)}_{D_i D_j} \hat{1}_{D_i}, \hat{1}_{D_j} \rangle + (\d k_0)^2 \log(\d k_0 \hat{\g})\langle R^{(1)}_{D_i D_j} \hat{1}_{D_i}, \hat{1}_{D_j} \rangle.
\end{align*}
We will look at this expression term by term. We observe that
\begin{align*}
    \langle \hat{K}^{\d k_0}_{D_i D_j} \hat{1}_{D_i}, \hat{1}_{D_j} \rangle = 
    - \frac{1}{2\pi} \log(\d k_0 \hat{\g}) \int_{D_j} \int_{D_i} \hat{1}_{D_i}(y) \upd y \hat{1}_{D_j}(x) \upd x.
\end{align*}
Since there is no distance element appearing in the integrand, there is not dependence on the distance between the particles $D_i$ and $D_i$. Thus, this is a constant with respect to the resonator distance,
\begin{align}\label{K_ij}
    K_{ij} := \langle \hat{K}^{\d k_0}_{D_i D_j} \hat{1}_{D_i}, \hat{1}_{D_j} \rangle
\end{align}
Next, we have
\begin{align*}
    \langle R^{(0)}_{D_i D_j}& \hat{1}_{D_i}, \hat{1}_{D_j} \rangle = -\frac{1}{2\pi} \int_{D_j} \int_{D_i} \log|x-y| \hat{1}_{D_i}(y) \upd y \hat{1}_{D_j}(x) \upd x \\
    &= - \frac{1}{2\pi\sqrt{|D_i||D_j|}} \int_{0}^{2\pi} \int_{0}^{\r} \int_{0}^{2\pi} \int_{0}^{\r} \log\Big|r_x e^{i t_x} - r_y e^{i t_y} + \mathrm{dist}(D_i,D_j)\Big| r_y r_x \upd r_y \upd t_y \upd r_x \upd t_x
\end{align*}
where we have changed to polar coordinates and used the fact that the particles are circular and identical. From this, we also get $|D_1|=|D_2|=|D_3|=\pi \r^2$. In addition, using the Taylor expansion of the logarithm function and Lemma \ref{taking distance out}, we have
\begin{align*}
    \log\Big|r_x e^{i t_x} - r_y e^{i t_y} + \text{dist}(D_i,D_j)\Big| \simeq \Big|r_x e^{i t_x} - r_y e^{i t_y}\Big| + \text{dist}(D_i,D_j) - 1 + O(\r^2).
\end{align*}
If we define
\begin{equation*}
    R^{(0)}:=- \frac{1}{2\pi^2\r^2} \int_{0}^{2\pi} \int_{0}^{\r} \int_{0}^{2\pi} \int_{0}^{\r} \Big(\Big|r_x e^{i t_x} - r_y e^{i t_y}\Big| - 1 \Big) r_y r_x \upd r_y \upd t_y \upd r_x \upd t_x,
\end{equation*}
then we have that
% This gives
\begin{align} \label{R0_ij}
    \langle R^{(0)}_{D_i D_j} \hat{1}_{D_i}, \hat{1}_{D_j} \rangle = %- \frac{1}{2\pi^2\d^2} \int_{0}^{2\pi} \int_{0}^{\d} \int_{0}^{2\pi} \int_{0}^{\d} \Big(\Big|r_x e^{i t_x} - r_y e^{i t_y}\Big| - 1 \Big) r_y r_x \upd r_y \upd t_y \upd r_x \upd t_x \\
    % &\hspace{4cm}- \frac{\d^2}{2} \text{dist}(D_i,D_j) + O(\d^4)\\
    R^{(0)} - \frac{\r^2}{2} \text{dist}(D_i,D_j) + O(\r^4).
\end{align}
The last term can be rewritten as
\begin{align*}
    \langle R^{(1)}_{D_i D_j}& \hat{1}_{D_i}, \hat{1}_{D_j} \rangle = - \frac{i}{4\pi} \int_{D_j} \int_{D_i} \frac{1}{|x-y|} \hat{1}_{D_i}(y) \upd y \hat{1}_{D_j}(x) \upd x \\
    &= \frac{-i}{4\pi^2\r^2} \int_{0}^{2\pi} \int_{0}^{\r} \int_{0}^{2\pi} \int_{0}^{\r} \frac{r_y r_x}{\Big|r_x e^{i t_x} - r_y e^{i t_y} + \text{dist}(D_i,D_j)\Big|} \upd r_y \upd t_y \upd r_x \upd t_x
\end{align*}
Again, using the Taylor expansion and Lemma \ref{taking distance out}, we have 
\begin{align*}
    \frac{1}{\Big|r_x e^{i t_x} - r_y e^{i t_y} + \text{dist}(D_i,D_j)\Big|} \simeq 2 + \text{dist}(D_i,D_j) - \Big|r_x e^{i t_x} - r_y e^{i t_y}\Big| + O(\r^2)
\end{align*}
Hence, defining
\begin{equation*}
    R^{(1)} := \frac{-i}{4\pi^2\r^2} \int_{0}^{2\pi} \int_{0}^{\r} \int_{0}^{2\pi} \int_{0}^{\r} \Big( 2 - \Big|r_x e^{i t_x} - r_y e^{i t_y}\Big| \Big) r_y r_x \upd r_y \upd t_y \upd r_x \upd t_x,
\end{equation*}
gives
\begin{align}\label{R1_ij}
        \langle R^{(1)}_{D_i D_j} \hat{1}_{D_i}, \hat{1}_{D_j} \rangle %&= \frac{-i}{4\pi^2\d^2} \int_{0}^{2\pi} \int_{0}^{\d} \int_{0}^{2\pi} \int_{0}^{\d} \Big( 2 - \Big|r_x e^{i t_x} - r_y e^{i t_y}\Big| \Big) r_y r_x \upd r_y \upd t_y \upd r_x \upd t_x - \frac{i \d^2}{4} \text{dist}(D_i, D_j) + O(\d^4) \\
        = R^{(1)} - \frac{i \r^2}{4} \text{dist}(D_i, D_j) + O(\r^4).
\end{align}
Gathering the results (\ref{K_ij}), (\ref{R0_ij}) and (\ref{R1_ij}), we obtain
\begin{align}
    N_{ij}(\d) = K_{ij} + R^{(0)} + (\d k_0)^2 \log(\d k_0 \hat{\g}) R^{(1)} + \left[ -\frac{\r^2}{2} - \frac{i \r^2}{4}(\d k_0)^2 \log(\d k_0 \hat{\g}) \right] \text{dist}(D_i,D_j) + O(\d^4),
\end{align}
and thus, by defining
\begin{align*}
    \mathbb{S}_{ij} := K_{ij} + R^{(0)} + (\d k_0)^2 \log(\d k_0 \hat{\g}) R^{(1)}
\end{align*}
and
\begin{align*}
    \mathbb{Q} := -\frac{\r^2}{2} - \frac{i \r^2}{4}(\d k_0)^2 \log(\d k_0 \hat{\g}),
\end{align*}
we get
\begin{align*}
    N_{ij}(\d) = \mathbb{S}_{ij} + \mathbb{Q}\text{dist}(D_i,D_j) + O(\d^4).
\end{align*}
Since the particles are identical, we have directly that $\mathbb{S}_{12} = \mathbb{S}_{23} = \mathbb{S}_{13} =: \mathbb{S}$, from which the result follows.
\end{proof}

\begin{remark}
We note that this theorem can also be generalized to the cases where the resonators are not circular. The adaptation required would be a change in the definitions of $\mathbb{S}$ and $\mathbb{Q}$.
\end{remark}

The above theorem allows us to write
\begin{align}\label{N(a)}
    N_{12} = \mathbb{S} + \mathbb{Q} \a_1 +  O(\d^4) , \ \ \ \
    N_{23} = \mathbb{S} + \mathbb{Q} \a_2 +  O(\d^4) \ \ \text{ and } \ \  
    N_{13} = \mathbb{S} + \mathbb{Q} \a_3 +  O(\d^4).
\end{align}

\subsection{Condition on characteristic size}

The first thing that we wish to understand is when the system (\ref{d-system1}) has a solution. Let us write
$$
X = N_{12} N_{23} N_{13} \ \ \ \ \ \ \ \ \text{ and } \ \ \ \ \ \ \ \ Y = N_{12}^2 + N_{23}^2 + N_{13}^2.
$$
Then, (\ref{d-system1}) becomes
\begin{equation}\label{d-system2}
    \left\{
    \begin{aligned}
    & 2 \mathcal{B}(\w_1,\d)^3 X + \mathcal{B}(\w_1,\d)^2 Y - 1 = 0, \\
    & 2 \mathcal{B}(\w_2,\d)^3 X + \mathcal{B}(\w_2,\d)^2 Y - 1 = 0, \\
    & 2 \mathcal{B}(\w_3,\d)^3 X + \mathcal{B}(\w_3,\d)^2 Y - 1 = 0. 
    \end{aligned}
    \right.
\end{equation}
Using the Gauss elimination process, we get that the following equation needs to be satisfied
\begin{equation*}
    \mathcal{B}(\w_3,\d)^2 \Big[\mathcal{B}(\w_1,\d)^3-\mathcal{B}(\w_2,\d)^3\Big]\Big[\mathcal{B}(\w_3,\d) - \mathcal{B}(\w_1,\d)\Big] = 
    \mathcal{B}(\w_2,\d)^2 \Big[\mathcal{B}(\w_1,\d)^3-\mathcal{B}(\w_3,\d)^3\Big]\Big[\mathcal{B}(\w_2,\d) - \mathcal{B}(\w_1,\d)\Big].
\end{equation*}
Expanding this, we obtain that the characteristic size $\d$ needs to satisfy 
\begin{align}\label{d-condition}
\begin{aligned}
    \d^2 \n(\d) = &\frac{-1}{3\w_1^2\xi(\w_1,k)\w_2^2\xi(\w_2,k)\w_3^2\xi(\w_3,k)} \cdot \Big[ \w_1^4\xi(\w_1,k)^2\w_3^6\xi(\w_3,k)^3 - \w_1^6\xi(\w_1,k)^3\w_3^4\xi(\w_3,k)^2 + \\
    &+ \w_2^6\xi(\w_2,k)^3\w_3^4\xi(\w_3,k)^2 - \w_1^4\xi(\w_1,k)^2\w_2^6\xi(\w_2,k)^3 + \w_1^6\xi(\w_1,k)^3\w_2^4\xi(\w_2,k)^2 - \\
    &-\w_2^4\xi(\w_2,k)^2\w_3^6\xi(\w_3,k)^3 \Big] \cdot \Big[ \w_1^2\xi(\w_1,k)\w_2^4\xi(\w_2,k)^2 + \w_2^2\xi(\w_2,k)\w_3^4\xi(\w_3,k)^2 +\\
    &+ \w_1^4\xi(\w_1,k)^2\w_3^2\xi(\w_3,k) - \w_1^2\xi(\w_1,k)\w_3^4\xi(\w_3,k)^2 - \w_2^4\xi(\w_2,k)^2\w_3^2\xi(\w_3,k) \\
    &- \w_1^4\xi(\w_1,k)^2\w_2^2\xi(\w_2,k) \Big]^{-1},
\end{aligned}
\end{align}
for (\ref{d-system2}) to have a solution.

\subsection{Condition on separation distances}

We assume that the condition (\ref{d-condition}) is satisfied. Then, we can reduce our study of the system (\ref{d-system2}) to finding a solution to
\begin{equation}\label{d-system3}
    \left\{
    \begin{aligned}
    & 2 \mathcal{B}(\w_1,\d)^3 X + \mathcal{B}(\w_1,\d)^2 Y - 1 = 0, \\
    & 2 \mathcal{B}(\w_2,\d)^3 X + \mathcal{B}(\w_2,\d)^2 Y - 1 = 0. 
    \end{aligned}
    \right.
\end{equation}
This gives
\begin{align}\label{X}
    X = \frac{ \mathcal{B}(\w_2,\d)^2\Big[ \mathcal{B}(\w_2,\d) - \mathcal{B}(\w_1,\d) \Big] - \Big[ \mathcal{B}(\w_2,\d)^3 - \mathcal{B}(\w_1,\d)^3 \Big] }{ 2 \mathcal{B}(\w_1,\d) \mathcal{B}(\w_2,\d)^2 \Big[ \mathcal{B}(\w_2,\d)\mathcal{B}(\w_1,\d)^2 - \mathcal{B}(\w_1,\d)^3 \Big] }
\end{align}
and
\begin{align}\label{Y}
    Y = \frac{\mathcal{B}(\w_2,\d)^3 - \mathcal{B}(\w_1,\d)^3}{\mathcal{B}(\w_1,\d)^2 \mathcal{B}(\w_2,\d)^2 \Big[ \mathcal{B}(\w_2,\d) - \mathcal{B}(\w_1,\d) \Big]}.
\end{align}
Fixing these values for $X$ and $Y$ and varying $\a_3\in\mathbb{R}$, we get from (\ref{d-system1}),
\begin{align}\label{a_2}
    \a_2(\a_3) = \frac{1}{\mathbb{Q}} \left(-\mathbb{S} \pm \sqrt{ \frac{ - C \pm \sqrt{C^2 - 4 X^2 (\mathbb{S}+\mathbb{Q}\a_3)^2} }{2(\mathbb{S}+\mathbb{Q}\a_3)^2} } \right),
\end{align}
where $C = (\mathbb{S}+\mathbb{Q}\a_3)^2 [(\mathbb{S}+\mathbb{Q}\a_3)^2 - Y]$ and
\begin{align}\label{a_1}
    \a_1(\a_3) = \frac{1}{\mathbb{Q}} \left( \frac{X}{\Big(\mathbb{S}+\mathbb{Q}\a_2(\a_3)\Big)\Big(\mathbb{S}+\mathbb{Q}\a_3\Big)} - \mathbb{S} \right).
\end{align}
Let us also note here, that in order for the distances found to make geometric sense, we require
\begin{align}\label{triangle condition}
    \Big|\a_3 - \a_2(\a_3)\Big| \leq \a_1(\a_3) \leq \Big| \a_3 + \a_2(\a_3) \Big|,
\end{align}
which gives an additional condition on $\a_3\in\mathbb{R}$. Therefore, we conclude that the distances $\a_1$, $\a_2$ and $\a_3$ must lie in the one-dimensional space given by
\begin{align}\label{1d-space}
    \left\{
    \begin{pmatrix}
    \a_1(\a_3) \\ \a_2(\a_3) \\ \a_3
    \end{pmatrix}
    : \a_3\in\mathbb{R} \text{ such that } (\ref{triangle condition}) \text{ holds and } \d\in\mathbb{R} \text{ is given by } (\ref{d-condition})
    \right\}.
\end{align}

\section{Conclusion}

We have developed an approach for modelling a coupled system of many subwavelength halide perovskite resonators. Their highly dispersive material parameters makes this a challenging problem, but, given their rapidly growing usage in electromagnetic devices, efficient mathematical methods like ours are becoming increasingly valuable. Our method is sufficiently concise that we have been able to use it for an inverse design problem, which would have required significant computational effort to solve using numerical simulation methods. These results can accelerate the design of advanced photonic devices \cite{JKM,KMBKL}, including those with complicated structures and geometries, such as the biomimetic eye developed by \cite{GPLLZZSQKJ}.

\section*{Conflicts of Interest}

The authors declare no conflicts of interest.

\section*{Acknowledgements}

The work of KA was supported by ETH Z\"urich under the project ETH-34 20-2. The work of BD was supported by the European Research Council H2020 FETOpen project BOHEME under grant agreement No.~863179.

\appendix

\section{Appendix}

\subsection{Three dimensions} \label{app:3d}

Here, we present the fundamentals of the analysis of the problem in the three-dimensional setting. We consider $N\in\mathbb{N}$ halide perovskite resontators $D_1,D_2, \dots, D_N$, made from the same material. We consider the integral operators $K^{\d k_0}_{D_i}$ and $R^{\d k_0}_{D_i D_j}$, for $i,j=1,2,...,N$ defined as in Definition~\ref{def:KR}. Then, the following lemma is a direct consequence of these definitions.

\begin{lemma}
The scattering problem (\ref{repping}) can be restated, using the Definition~\ref{def:KR}, as
% Given the definition of the integral operators $K^{\d k_0}_{D_i}$ and $R^{\d k_0}_{D_i D_j}$, for $i,j=1,2$, from (\ref{repping}), we obtain the following system of operator equations:
\begin{align}\label{system 1}
    \begin{pmatrix}
    1-\d^2\w^2\xi(\w,k)K^{\d k_0}_{D_1} & -\d^2\w^2\xi(\w,k) R^{\d k_0}_{D_2 D_1} & \dots & -\d^2\w^2\xi(\w,k) R^{\d k_0}_{D_N D_1} \\
    -\d^2\w^2\xi(\w,k) R^{\d k_0}_{D_1 D_2} & 1-\d^2\w^2\xi(\w,k)K^{\d k_0}_{D_2} & \dots & -\d^2\w^2\xi(\w,k) R^{\d k_0}_{D_N D_2}\\
    \vdots & \vdots & \ddots & \vdots \\
    -\d^2\w^2\xi(\w,k) R^{\d k_0}_{D_1 D_N} & -\d^2\w^2\xi(\w,k) R^{\d k_0}_{D_2 D_N} & \dots & 1-\d^2\w^2\xi(\w,k)K^{\d k_0}_{D_N}
    \end{pmatrix}
    \begin{pmatrix}
    u|_{D_1} \\ u|_{D_2} \\ \vdots \\ u|_{D_N}
    \end{pmatrix}
    =
    \begin{pmatrix}
    u_{in}|_{D_1} \\ u_{in}|_{D_2} \\ \vdots \\ u_{in}|_{D_N}
    \end{pmatrix}.
\end{align}
\end{lemma}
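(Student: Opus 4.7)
The plan is to derive the matrix equation directly from the Lippmann-Schwinger representation stated in Theorem~\ref{Integral representation}, together with the decomposition (\ref{repping}) of the scattered field across the disjoint resonators $D_1,\dots,D_N$. Everything reduces to restricting this single integral identity to each particle in turn and then relabelling the resulting integrals using Definition~\ref{def:KR}. No new analytic input is required; the content is purely organisational.

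First, I would fix an index $\ell \in \{1,2,\dots,N\}$ and evaluate the identity (\ref{repping}) for $x\in D_\ell$. Since the $D_i$ are pairwise disjoint, the sum on the right-hand side naturally splits into the self-interaction term ($i=\ell$) and the cross-interaction terms ($i\ne\ell$). By Definition~\ref{def:KR}, the self-interaction term equals $\d^2\w^2\xi(\w,k)\,K^{\d k_0}_{D_\ell}(u|_{D_\ell})$, while each cross term equals $\d^2\w^2\xi(\w,k)\,R^{\d k_0}_{D_i D_\ell}(u|_{D_i})$. Rearranging to put the unknown traces $u|_{D_i}$ on the left and the incident field $u_{in}|_{D_\ell}$ on the right yields
\begin{equation*}
    \bigl(1-\d^2\w^2\xi(\w,k)K^{\d k_0}_{D_\ell}\bigr)\,u|_{D_\ell} \;-\; \d^2\w^2\xi(\w,k)\sum_{i\ne \ell} R^{\d k_0}_{D_i D_\ell}\bigl(u|_{D_i}\bigr) \;=\; u_{in}|_{D_\ell}.
\end{equation*}

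Assembling these $N$ scalar-operator equations as $\ell$ ranges over $\{1,\dots,N\}$ produces exactly the matrix system (\ref{system 1}), with the diagonal entries arising from the self-interaction operators $K^{\d k_0}_{D_\ell}$ and the off-diagonal entries from the cross-interaction operators $R^{\d k_0}_{D_i D_\ell}$. Since every step is a direct consequence of a definition or of the disjointness of the $D_i$, there is no real obstacle in this proof; the only thing one must be mildly careful about is the convention used to index the operators $R^{\d k_0}_{D_i D_\ell}$, namely that the first subscript denotes the domain of integration and the second the domain of evaluation, so that the operator appearing in row $\ell$ of the matrix is $R^{\d k_0}_{D_i D_\ell}$ rather than $R^{\d k_0}_{D_\ell D_i}$. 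This matches the layout displayed in (\ref{system 1}).
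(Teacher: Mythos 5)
Your proof is correct and follows exactly the argument the paper intends but omits: the paper merely asserts that the lemma is ``a direct consequence of these definitions'' without spelling out the restriction to each $D_\ell$, the split of the sum into the $i=\ell$ and $i\ne\ell$ terms, and the relabelling via Definition~\ref{def:KR}. You have filled in those details accurately, including the correct reading of the subscript convention for $R^{\d k_0}_{D_i D_j}$.
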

Thus, the scattering resonance problem is to find $\w$ such that the operator in (\ref{system 1}) is singular, or equivalently, such that there exists $(u_1,u_2,...,u_N) \in L^2(D_1) \times L^2(D_2) \times ... \times L^2(D_N)$, $(u_1,u_2,...,u_N) \ne \mathbf{0}$, such that
\begin{align}\label{system 2}
    \begin{pmatrix}
    1-\d^2\w^2\xi(\w,k)K^{\d k_0}_{D_1} & -\d^2\w^2\xi(\w,k) R^{\d k_0}_{D_2 D_1} & \dots & -\d^2\w^2\xi(\w,k) R^{\d k_0}_{D_N D_1} \\
    -\d^2\w^2\xi(\w,k) R^{\d k_0}_{D_1 D_2} & 1-\d^2\w^2\xi(\w,k)K^{\d k_0}_{D_2} & \dots & -\d^2\w^2\xi(\w,k) R^{\d k_0}_{D_N D_2}\\
    \vdots & \vdots & \ddots & \vdots \\
    -\d^2\w^2\xi(\w,k) R^{\d k_0}_{D_1 D_N} & -\d^2\w^2\xi(\w,k) R^{\d k_0}_{D_2 D_N} & \dots & 1-\d^2\w^2\xi(\w,k)K^{\d k_0}_{D_N}
    \end{pmatrix}
    \begin{pmatrix}
    u_1 \\ u_2 \\ \vdots \\ u_N
    \end{pmatrix}
    =
    \begin{pmatrix}
    0 \\ 0 \\ \vdots \\ 0
    \end{pmatrix}.
\end{align}
This gives the main result of the three-dimensional case.
\begin{theorem}
The scattering resonance problem in three dimensions becomes finding $\w\in\mathbb{C}$, such that
\begin{align*}
    \det(\mathcal{K}) = 0
\end{align*}
where the matrix $\mathcal{K}$ is given by
\begin{align}\label{K}
    \mathcal{K}_{ij} = 
    \begin{cases}
    \langle R^{\d k_0}_{D_i D_{i+1 \lfloor N \rfloor }} \phi_i^{(\d)}, \phi^{(\d)}_{i+1\lfloor N \rfloor} \rangle , &\text{ if $i=j$,} \\
    - \mathcal{A}_i(\w,\d) \langle R^{\d k_0}_{D_j D_i} \phi^{(\d)}_j, \phi^{(\d)}_i \rangle \langle R^{\d k_0}_{D_i D_{i+1 \lfloor N \rfloor }} \phi_i^{(\d)}, \phi^{(\d)}_{i+1\lfloor N \rfloor} \rangle, &\text{ if $i \ne j$.}
    \end{cases}
\end{align}
where $k_0=\mu_0\varepsilon_0\omega$ and
\begin{align}\label{A_i}
    \mathcal{A}_i(\w,\d) := \frac{\d^2 \w^2 \xi(\w,k)}{1 - \d^2 \w^2 \xi(\w,k) \l^{(i)}_{\d}}, \ \ i=1,...,N,
\end{align}
where $\l^{(i)}_{\d}$ and $\phi_i^{(\d)}$ are the eigenvalues and the respective eigenvectors associated to the particle $D_i$ of the potential $K^{\d k_0}_{D_i}$, for $i=1,2,\dots,N$.
\end{theorem}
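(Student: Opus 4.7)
The plan is to mirror, step by step, the proof of Theorem~\ref{thm:approximation} from the two-dimensional case, but working directly with the operators $K^{\d k_0}_{D_i}$ and $R^{\d k_0}_{D_i D_j}$ rather than their asymptotic surrogates $M^{\d k_0}_{D_i}$ and $N^{\d k_0}_{D_i D_j}$. The simplification is that the three-dimensional free-space Green's function $G(x,k)=-e^{ik|x|}/(4\pi|x|)$ has no logarithmic singularity in $k$, so the small-$\d$ expansion of $K^{\d k_0}_{D_i}$ is analytic in $\d$ and there is no need to absorb a $\log(\d k_0)$ contribution into an auxiliary operator before applying the pole-pencil decomposition.

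First, I would rewrite the system \eqref{system 2} row by row as
\begin{equation*}
    \bigl(1 - \d^2 \w^2 \xi(\w,k) K^{\d k_0}_{D_i}\bigr) u_i \;=\; \d^2 \w^2 \xi(\w,k) \sum_{j \ne i} R^{\d k_0}_{D_j D_i} u_j, \qquad i=1,\dots,N,
\end{equation*}
and invert the diagonal block using the pole-pencil expansion
\begin{equation*}
    \bigl(1 - \d^2 \w^2 \xi(\w,k) K^{\d k_0}_{D_i}\bigr)^{-1}(\cdot) \;=\; \frac{\langle \cdot, \phi_i^{(\d)}\rangle \phi_i^{(\d)}}{1 - \d^2 \w^2 \xi(\w,k)\l^{(i)}_{\d}} + \widetilde{R}_i[\w](\cdot),
\end{equation*}
where $(\l^{(i)}_{\d},\phi_i^{(\d)})$ is the eigenpair of $K^{\d k_0}_{D_i}$ closest to resonance and $\widetilde{R}_i[\w]$ is the subleading term, which, as in \cite{AD}, is negligible in the regime of interest. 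This produces an equation of the form $u_i = \mathcal{A}_i(\w,\d)\sum_{j\ne i}\langle R^{\d k_0}_{D_j D_i} u_j,\phi_i^{(\d)}\rangle\,\phi_i^{(\d)}$.

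Next, to turn this into a closed linear system in the scalars $\langle u,\phi_j^{(\d)}\rangle$, I would apply $R^{\d k_0}_{D_i D_{i+1\lfloor N\rfloor}}$ to both sides and pair against $\phi^{(\d)}_{i+1\lfloor N\rfloor}$, then invoke the three-dimensional analogue of Lemma~\ref{approx} to substitute $u_j \simeq \langle u,\phi_j^{(\d)}\rangle \phi_j^{(\d)}$. The analogue of Lemma~\ref{approx} follows by the same argument used in Appendix~\ref{app:lem2.5}, noting that the error term is in fact better behaved in 3D because the small-$\d$ asymptotics of $K^{\d k_0}_{D_i}$ do not contain the $\log(\d)$ factor that appears in 2D. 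Collecting the $N$ resulting scalar equations into matrix form yields exactly
\begin{equation*}
    \mathcal{K}\bigl(\langle u,\phi_1^{(\d)}\rangle,\dots,\langle u,\phi_N^{(\d)}\rangle\bigr)^{\top} = 0,
\end{equation*}
with $\mathcal{K}$ as in \eqref{K}, so the existence of a nontrivial mode forces $\det(\mathcal{K})=0$.

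The main obstacle, as in the two-dimensional case, is controlling the combined error of two approximations simultaneously: the neglected remainder $\widetilde{R}_i[\w]$ in the pole-pencil decomposition and the $O(\r^2)$ error in the diluteness approximation for $u_j$. In 3D this is easier than in 2D because both errors enter as genuine powers of $\d$ (and $\r$) rather than mixed powers-and-logarithms, so the verification reduces to checking that the off-diagonal terms $\mathcal{A}_i(\w,\d)\langle R^{\d k_0}_{D_j D_i}\phi_j^{(\d)},\phi_i^{(\d)}\rangle\langle R^{\d k_0}_{D_i D_{i+1\lfloor N\rfloor}}\phi_i^{(\d)},\phi^{(\d)}_{i+1\lfloor N\rfloor}\rangle$ in $\mathcal{K}$ dominate the errors uniformly in $\d,\r$ near $0$, which is immediate from the explicit $\d^2$-scaling of $\mathcal{A}_i$ against $\d^4$-type error contributions.
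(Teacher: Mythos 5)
Your proof mirrors the paper's own argument step for step: rewrite \eqref{system 2} row by row, invert the diagonal blocks via the pole-pencil decomposition of $\bigl(1-\d^2\w^2\xi(\w,k)K^{\d k_0}_{D_i}\bigr)^{-1}$ with remainder neglected as in \cite{AD}, apply $R^{\d k_0}_{D_i D_{i+1\lfloor N\rfloor}}$ and pair with $\phi^{(\d)}_{i+1\lfloor N\rfloor}$, substitute $u_j\simeq\langle u,\phi_j^{(\d)}\rangle\phi_j^{(\d)}$ via the analogue of Lemma~\ref{approx}, and assemble the linear system in $\langle u,\phi_i^{(\d)}\rangle$. Your added observation that the absence of a logarithmic term in the 3D Green's function makes the error bookkeeping cleaner is accurate but orthogonal to the structure of the argument, which is the same as the paper's.
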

\begin{proof}
We observe that (\ref{system 2}) is equivalent to 
\begin{align*}
    &\begin{pmatrix}
    1-\d^2\w^2\xi(\w,k)K^{\d k_0}_{D_1} & 0 & \dots & 0 \\
    0 & 1-\d^2\w^2\xi(\w,k)K^{\d k_0}_{D_2} & \dots & 0 \\
    \vdots & \vdots & \ddots & \vdots \\
   0 & 0 & \dots & 1-\d^2\w^2\xi(\w,k)K^{\d k_0}_{D_N}
    \end{pmatrix}
    \begin{pmatrix}
    u_1 \\ u_2 \\ \vdots \\ u_N
    \end{pmatrix} \\
    &\hspace{+3cm}
     - \d^2 \w^2 \xi(\w,k)
    \begin{pmatrix}
    0 &  R^{\d k_0}_{D_2 D_1} & \dots &  R^{\d k_0}_{D_N D_1} \\
     R^{\d k_0}_{D_1 D_2} & 0 & \dots &  R^{\d k_0}_{D_N D_2}\\
    \vdots & \vdots & \ddots & \vdots \\
     R^{\d k_0}_{D_1 D_N} &  R^{\d k_0}_{D_2 D_N} & \dots & 0
    \end{pmatrix}
    \begin{pmatrix}
    u_1 \\ u_2 \\ \vdots \\ u_N
    \end{pmatrix}
    =
    \begin{pmatrix}
    0 \\ 0 \\ \vdots \\ 0
    \end{pmatrix},
\end{align*}
which gives
\begin{align}\label{scat4}
    \begin{pmatrix}
    u_1 \\ u_2 \\ \vdots \\ u_N
    \end{pmatrix} 
    - \d^2 \w^2 \xi(\w,k)
    \mathbb{N}
    \begin{pmatrix}
    \displaystyle\sum_{j=1,j\ne1}^N R^{\d k_0}_{D_j D_1} u_j \\
    \displaystyle\sum_{j=1,j\ne2}^N R^{\d k_0}_{D_j D_2} u_j \\
    \vdots \\
    \displaystyle\sum_{j=1,j\ne N}^N R^{\d k_0}_{D_j D_N} u_j
    \end{pmatrix} 
    =
    \begin{pmatrix}
    0 \\ 0 \\ \vdots \\ 0
    \end{pmatrix},
\end{align}
where $\mathbb{N}$ is the diagonal matrix given by 
\begin{align*}
    \mathbb{N}_{ij} = 
    \begin{cases}
    \Big( 1 - \d^2 \w^2\xi(\w,k) K^{\d k_0}_{D_i} \Big)^{-1} , \ \ &\text{ if } i=j, \\
    0, &\text{ if } i \ne j.
    \end{cases}
\end{align*}
Let us now apply a pole-pencil decomposition on the operators $\Big(1 - \d^2 \w^2 \xi(\w,k) K^{\d k_0}_{D_i}\Big)^{-1}$, for $i=1,2,\dots,N$. We see that
\begin{align*}
    \Big(1 - \d^2 \w^2 \xi(\w,k) K^{\d k_0}_{D_i}\Big)^{-1}(\cdot) = \frac{ \langle \cdot, \phi_i^{(\d)} \rangle \phi_i^{(\d)} }{1 - \d^2 \w^2 \xi(\w,k) \l^{(i)}_{\d}} + R_i[\w](\cdot),
\end{align*}
where $\l^{(i)}_{\d}$ and $\phi_i^{(\d)}$ are the eigenvalues and the respective eigenvectors of the potential $K^{\d k_0}_{D_i}$ associated to the particle $D_i$, for $i=1,2,\dots,N$. We also recall that the remainder terms $R_i[\w](\cdot)$ can be neglected (\cite{AD}). Then, (\ref{scat4}) becomes
\begin{align*}
    \begin{pmatrix}
    u_1 \\ u_2 \\ \vdots \\ u_N
    \end{pmatrix} 
    - \d^2 \w^2 \xi(\w,k)
    \tilde{\mathbb{N}}
    \begin{pmatrix}
    \displaystyle\sum_{j=1,j\ne1}^N R^{\d k_0}_{D_j D_1} u_j \\
    \displaystyle\sum_{j=1,j\ne2}^N R^{\d k_0}_{D_j D_2} u_j \\
    \vdots \\
    \displaystyle\sum_{j=1,j\ne N}^N R^{\d k_0}_{D_j D_N} u_j
    \end{pmatrix} 
    =
    \begin{pmatrix}
    0 \\ 0 \\ \vdots \\ 0
    \end{pmatrix},
\end{align*}
where the matrix $\tilde{\mathbb{N}}$ is given by
\begin{align*}
    \tilde{\mathbb{N}}_{ij} = 
    \begin{cases}
    \frac{ \langle \cdot, \phi_i^{(\d)} \rangle \phi_i^{(\d)} }{1 - \d^2 \w^2 \xi(\w,k) \l^{(i)}_{\d}} , \ \ &\text{ if } i=j, \\
    0, &\text{ if } i \ne j.
    \end{cases}
\end{align*}
This is equivalent to the system of equations
\begin{align*}
    u_i - \frac{\d^2 \w^2 \xi(\w,k)}{1 - \d^2 \w^2 \xi(\w,k) \l^{(i)}_{\d}} \displaystyle\sum\limits_{j=1,j \ne 1}^N \langle R^{\d k_0}_{D_j D_i} u_j, \phi_i^{(\d)}  \rangle \phi_i^{(\d)}  &= 0, \quad \text{ for each } i=1,\dots,N.
\end{align*}
We apply on the $i$-th line the operator $R^{\d k_0}_{D_i D_{i+1 \lfloor N \rfloor}}$ and then take the product with $\phi^{(\d)}_{i+1 \lfloor N \rfloor}$. Then, we find that
\begin{align}\label{slang}
    \langle R^{\d k_0}_{D_i D_{i+1 \lfloor N \rfloor}} u_i, \phi_{i+1 \lfloor N \rfloor}^{(\d)}  \rangle - \frac{\d^2 \w^2 \xi(\w,k)}{1 - \d^2 \w^2 \xi(\w,k) \l^{(i)}_{\d}} \sum\limits_{j=1,j \ne i}^N \langle R^{\d k_0}_{D_j D_i} u_j, \phi_i^{(\d)}  \rangle \langle R^{\d k_0}_{D_i D_{i+1 \lfloor N \rfloor}} \phi_i^{(\d)}, \phi_{i+1 \lfloor N \rfloor}^{(\d)}  \rangle  = 0, 
\end{align}
for each $i=1,\dots,N$. Then, using the definition (\ref{A_i}), the system (\ref{slang}) becomes
\begin{align}\label{profora}
    \langle R^{\d k_0}_{D_i D_{i+1 \lfloor N \rfloor}} u_i, \phi_{i+1 \lfloor N \rfloor}^{(\d)}  \rangle - \mathcal{A}_i(\w,\d) \displaystyle\sum\limits_{j=1,j \ne i}^N \langle R^{\d k_0}_{D_j D_i} u_j, \phi_i^{(\d)}  \rangle \langle R^{\d k_0}_{D_i D_{i+1 \lfloor N \rfloor}} \phi_i^{(\d)}, \phi_{i+1 \lfloor N \rfloor}^{(\d)}  \rangle  = 0,
\end{align}
for each $i=1,\dots,N$. Applying (\ref{approx formula}) to (\ref{profora}), we reach the linear system of equations
\begin{align} 
\mathcal{K}
\begin{pmatrix}
    \langle u, \phi_1^{(\d)} \rangle \\
    \langle u, \phi_2^{(\d)} \rangle \\
    \vdots \\
    \langle u, \phi_N^{(\d)} \rangle
\end{pmatrix}
=
\begin{pmatrix}
    0 \\
    0 \\
    \vdots \\
    0
\end{pmatrix},
\end{align}
where $\mathcal{K}$ is the matrix given by (\ref{K}).
\end{proof}

\subsection{Proof of Lemma~\ref{approx}} \label{app:lem2.5}

\begin{proof}
We will show that the approximation formula (\ref{approx formula}) holds for sufficiently small $\rho\to0$, when $\delta$ is also small. It is important to check the uniformity of these results with respect to $\delta$. In particular, we will take $\r>0$ such that $\r\to0$ at the same rate as $\d\to0$. That is, $\r=O(\d)$ and $\d=O(\r)$. This gives the uniformity of the error term with respect to small characteristic size $\d$.

Our argument is based on Theorem~2.10 of \cite{BLRM}. In particular, once we have shown that the assumptions of this Theorem hold, Lemma~\ref{approx} will follow directly. We will present this proof in the two-dimensional setting, but it could easily be modified to three dimensions. Also, for simplicity, we will consider identical resonators, but the proof will be the same for particles of different sizes.

Recall that in Corollary~\ref{indic} we showed that $\phi_{i}^{(\d)} = \hat{1}_{D_i}$, for $i=1,2,..,N$. As a result, the desired approximation $u_i \simeq \langle u,\phi_i^{(\d)} \rangle \phi_i^{(\d)} + O(\r^2)$ from (\ref{approx formula}) is equivalent to $u_i \simeq \langle u, \hat{1}_{D_i} \rangle \hat{1}_{D_i} + O(\r^2)$. Then, we define the operator $p_{\d}$ as follows
\begin{align}
    p_{\r}: u_i \in L^2(D_i) &\longmapsto \langle u, \hat{1}_{D_i} \rangle \hat{1}_{D_i} \in L^{2}(D),
\end{align}

To be able to use Theorem~2.10 of \cite{BLRM}, the conditions that need to be satisfied are the following:
\begin{enumerate}
    \item It holds that
    \begin{align*}
        \lim_{\r\to0} \| p_{\r}u_i \|_{L^2(D)} = \| u_i \|_{L^2(D)}, \quad \forall i=1,\dots,N.
    \end{align*}
    \item For every compact set $\mathcal{C}\subset\mathbb{C}\setminus\{0\}$, it holds
    \begin{align*}
    \sup_{\w\in\mathcal{C}} \| \mathcal{L} \|_{\sup} < \infty,
    \end{align*}
    uniformly for all $\d>0$ and all $\r>0$, where the norm $\|\cdot\|_{\sup}$ is defined for a square matrix $P \in \mathbb{C}^{N\times N}$ as $\| P \|_{\sup} = \sup_{1\leq i,j \leq N} |P_{ij}|$.
    \item $\langle u, \hat{1}_{D_i} \rangle \hat{1}_{D_i}$ converges regularly to $u_i$, \emph{i.e.}
    \begin{itemize}
        \item $\lim_{\r\to0} \| \mathcal{L}p_{\r}u - \mathcal{F}u \|_{\sup} = 0,$ where $\mathcal{F}u$ denotes our system without the use of the approximation formula (\ref{approx formula}).
        \item For every subsequence $\r'$ of $\r$, it holds that
        $\lim_{\r'\to0} \| u_i - p_{\r'}u_i \|_{L^2(D)} = 0, \quad \forall i=1,\dots,N. $
    \end{itemize}
\end{enumerate}
Let us proceed to their proof.

\subsubsection{First condition: Convergence in norm}

We have that
\begin{align*}
\begin{aligned}
    \|p_{\r}u_i \|_{L^2(D)} &=\left( \int_{D} \left| \int_{D} u(y) \hat{1}_{D_i}(y) \upd y \hat{1}_{D_i}(x) \right|^2 \upd x \right)^{\frac{1}{2}} =\left( \frac{1}{|D_i|^2} \int_{D_i} \left| \int_{D_i} u(y) \upd y \right| \upd x \right)^{\frac{1}{2}}\\
    &= \frac{1}{\sqrt{|D_i|}} \left| \int_{D_i} u(y) \upd y 
    \right|.
\end{aligned}
\end{align*}
Then, from the Cauchy-Schwartz inequality,
\begin{align*}
    \|p_{\r}u_i \|_{L^2(D)} \leq \frac{1}{\sqrt{|D_i|}} \left( \int_{D_i} |u(y)|^2 \upd y \int_{D_i} 1 \upd y \right)^{\frac{1}{2}} = \| u_i \|_{L^2(D)}.
\end{align*}
We can also see that, as $\r\to0$,
\begin{align*}
    \| u_i \|_{L^2(D)} \to 0.
\end{align*}
Hence, we have that
\begin{align}\label{(A1)}
    \lim_{\r\to0} \|p_{\r}u_i \|_{L^2(D)} =  \| u_i \|_{L^2(D)}.
\end{align}

\subsubsection{Second condition: Matrix norm boundedness}

We need to show that, for every compact $\mathcal{C}\subset\mathbb{C}\setminus\{0\}$,
\begin{align}\label{A2}
    \sup_{\w\in\mathcal{C}} \| \mathcal{L} \|_{\sup} = \sup_{\w\in\mathcal{C}} \left( \sup_{1\leq i,j \leq N} |\mathcal{L}_{ij}| \right) <\infty.
\end{align}
Indeed, let $\mathcal{C}$ denote a compact subset of $\mathbb{C}\setminus\{0\}$. Then, $\mathcal{C}$ is closed and bounded, which implies that there exist $s_1,s_2\in\mathcal{C}$ such that $|s_1|\leq|\w|\leq|s_2|$, for all $\w\in\mathcal{C}$. This gives the following bounds
\begin{align}\label{bds compact}
    \log|s_1| \leq \log|\w| \leq \log|s_2| \ \ \ \ \mathrm{ and } \ \ \ \ |s_1|^2 \log|s_1| \leq |\w|^2 \log|\w| \leq |s_2|^2 \log|s_2|,
\end{align}
and so, from Definition~\ref{MN}, we get
\begin{align}\label{N bound}
    \sup_{\w\in\mathcal{C}}\Big|\langle N^{\d k_0}_{D_j D_i} \hat{1}_{D_j}, \hat{1}_{D_i} \rangle\Big| < \infty.
\end{align}
for all $i,j=1,\dots,N,$ with $i\ne j$. Also, using (\ref{B_i}) and (\ref{bds compact}), we get that there exist $\mathcal{F}_1,\mathcal{F}_2\in[0,\infty)$ such that
\begin{align*}
    \mathcal{F}_1 \leq |\mathcal{B}(\w,\d)| \leq \mathcal{F}_2
\end{align*}
for all $\w\in\mathcal{C}$, which gives
\begin{align}\label{B bound}
    \sup_{\w\in\mathcal{C}}\Big|\mathcal{B}(\w,\d)\Big| < \infty.
\end{align}
Applying (\ref{N bound}) and (\ref{B bound}) to the definition of $\mathcal{L}$ in (\ref{2dscat6}), we obtain the desired bound (\ref{A2}).

\subsubsection{Third condition: Approximation convergence}

For the next part, we have to show a convergence result as $\r\rightarrow0$ on the matrix formulations of the problem before and after using (\ref{approx formula}). We will provide this in the setting of three resonators, since the calculations are lengthy and similar for $N\in\mathbb{N}$ particles and so can be easily extrapolated. In this case, we have $\mathcal{L} p_{\r}u = \Big( (\mathcal{L} p_{\r}u)_1, (\mathcal{L} p_{\r}u)_2, (\mathcal{L} p_{\r}u)_3 \Big)^{\top}$, where
\begin{align*}
    (\mathcal{L} p_{\r}u)_i =
    &\langle N_{D_i D_{i+1\lfloor 3 \rfloor}}^{\d k_0} \hat{1}_{D_i},\hat{1}_{D_{i+1\lfloor 3 \rfloor}} \rangle \langle u, \hat{1}_{D_i} \rangle - \mathcal{B}(\w,\d) \langle N_{D_i D_{i+1\lfloor 3 \rfloor}}^{\d k_0} \hat{1}_{D_i},\hat{1}_{D_{i+1\lfloor 3 \rfloor}} \rangle^2 \langle u, \hat{1}_{D_{i+1\lfloor 3 \rfloor}} \rangle\\
    &- \mathcal{B}(\w,\d) \langle N_{D_i D_{i+1\lfloor 3 \rfloor}}^{\d k_0} \hat{1}_{D_i},\hat{1}_{D_{i+1\lfloor 3 \rfloor}} \rangle \langle N_{D_{i+2\lfloor 3 \rfloor} D_i}^{\d k_0} \hat{1}_{D_{i+2\lfloor 3 \rfloor}},\hat{1}_{D_i} \rangle\langle u, \hat{1}_{D_{i+2\lfloor 3 \rfloor}} \rangle,
\end{align*}
for $i=1,2,3$ and we define $\mathcal{F}u$ to be our system before the approximation, \emph{i.e.}
\begin{align*}
    \mathcal{F}u = 
    \begin{pmatrix}
        \langle N_{D_1 D_2}^{\d k_0} u_1,\hat{1}_{D_2} \rangle - \mathcal{B}(\w,\d) \Big[ \langle N_{D_2 D_1}^{\d k_0} u_2,\hat{1}_{D_1} \rangle + \langle N_{D_3 D_1}^{\d k_0} u_3,\hat{1}_{D_1} \rangle \Big] \langle N_{D_1 D_2}^{\d k_0} \hat{1}_{D_1},\hat{1}_{D_2} \rangle \\
        \langle N_{D_1 D_2}^{\d k_0} u_1,\hat{1}_{D_2} \rangle - \mathcal{B}(\w,\d) \Big[ \langle N_{D_2 D_1}^{\d k_0} u_2,\hat{1}_{D_1} \rangle + \langle N_{D_3 D_1}^{\d k_0} u_3,\hat{1}_{D_1} \rangle \Big] \langle N_{D_1 D_2}^{\d k_0} \hat{1}_{D_1},\hat{1}_{D_2} \rangle \\
        \langle N_{D_3 D_1}^{\d k_0} u_3,\hat{1}_{D_1} \rangle - \mathcal{B}(\w,\d) \Big[ \langle N_{D_1 D_3}^{\d k_0} u_1,\hat{1}_{D_3} \rangle + \langle N_{D_2 D_3}^{\d k_0} u_2,\hat{1}_{D_3} \rangle \Big] \langle N_{D_3 D_1}^{\d k_0} \hat{1}_{D_3},\hat{1}_{D_1} \rangle 
    \end{pmatrix}.
\end{align*}
We want to show that
\begin{align} \label{lim approx}
    \lim_{\r\rightarrow0}\| \mathcal{L}p_{\r}u - \mathcal{F}u \|_{\sup} = 0.
\end{align}
Indeed, let us treat this difference at each entry separately. Since, the operators repeat themselves with different indices, and the particles are identical, whatever we show for the first entry holds for the rest. Hence, our study focuses on
\begin{align*}
    \mathcal{W} := &\lim_{\r\rightarrow0} \Big|\langle N_{D_1 D_2}^{\d k_0} \hat{1}_{D_1},\hat{1}_{D_2} \rangle  \langle u, \hat{1}_{D_1} \rangle - \mathcal{B}(\w,\d) \langle N_{D_1 D_2}^{\d k_0} \hat{1}_{D_1},\hat{1}_{D_2} \rangle^2  \langle u, \hat{1}_{D_2} \rangle \\
    &- \mathcal{B}(\w,\d) \langle N_{D_1 D_2}^{\d k_0} \hat{1}_{D_1},\hat{1}_{D_2} \langle N_{D_3 D_1}^{\d k_0} \hat{1}_{D_3},\hat{1}_{D_1} \rangle  \langle u, \hat{1}_{D_3} \rangle - \Big( \langle N_{D_1 D_2}^{\d k_0} u_1,\hat{1}_{D_2} \rangle \\
    &- \mathcal{B}(\w,\d) \Big[ \langle N_{D_2 D_1}^{\d k_0} u_2,\hat{1}_{D_1} \rangle + \langle N_{D_3 D_1}^{\d k_0} u_3,\hat{1}_{D_1} \rangle \Big] \langle N_{D_1 D_2}^{\d k_0} \hat{1}_{D_1},\hat{1}_{D_2} \rangle \Big)\Big|
\end{align*}
We are going to split $\mathcal{W}$ into three differences
$$
\mathcal{W}_1 := \langle N_{D_1 D_2}^{\d k_0} \hat{1}_{D_1},\hat{1}_{D_2} \rangle  \langle u, \hat{1}_{D_1} \rangle - \langle N_{D_1 D_2}^{\d k_0} u_1,\hat{1}_{D_2} \rangle, 
$$
$$
\mathcal{W}_2 := \mathcal{B}(\w,\d) \langle N_{D_2 D_1}^{\d k_0} u_2,\hat{1}_{D_1} \rangle \langle N_{D_1 D_2}^{\d k_0} \hat{1}_{D_1},\hat{1}_{D_2} \rangle - \mathcal{B}(\w,\d) \langle N_{D_1 D_2}^{\d k_0} \hat{1}_{D_1},\hat{1}_{D_2} \rangle^2  \langle u, \hat{1}_{D_2} \rangle
$$
and
$$
\mathcal{W}_3 := \mathcal{B}(\w,\d) \langle N_{D_3 D_1}^{\d k_0} u_3,\hat{1}_{D_1} \rangle \langle N_{D_1 D_2}^{\d k_0} \hat{1}_{D_1},\hat{1}_{D_2} \rangle - \mathcal{B}(\w,\d) \langle N_{D_1 D_2}^{\d k_0} \hat{1}_{D_1},\hat{1}_{D_2} \langle N_{D_3 D_1}^{\d k_0} \hat{1}_{D_3},\hat{1}_{D_1} \rangle  \langle u, \hat{1}_{D_3} \rangle.
$$
We will study them separately to show the convergence result. Let us recall that for $u\in L^2(D)$
\begin{align*}
    N_{D_i D_j}^{\d k_0} u = \hat{K}^{\d k_0}_{D_i D_j}u + R^{(0)}_{D_i D_j} u + (\d k_0)^2 \log(\d k_0 \hat{\g}) R^{(1)}_{D_i D_j} u.
\end{align*}
Then, we have
\begin{align*}
    \mathcal{W}_1 &=  \frac{1}{|D_1|\sqrt{|D_2|}}  \langle N_{D_1 D_2}^{\d k_0} 1_{D_1},1_{D_2} \rangle  \langle u, 1_{D_1} \rangle - \frac{1}{\sqrt{|D_2|}}\langle N_{D_1 D_2}^{\d k_0} u_1,1_{D_2} \rangle \\
    &=  \frac{1}{|D_1|\sqrt{|D_2|}} \langle \hat{K}^{\d k_0}_{D_1 D_2} 1_{D_1}, 1_{D_2} \rangle \langle u,1_{D_1} \rangle - \frac{1}{\sqrt{|D_2|}}\langle \hat{K}_{D_1 D_2}^{\d k_0} u_1,1_{D_2} \rangle + \\
    & \ \ \ + \frac{1}{|D_1|\sqrt{|D_2|}} \langle R^{(0)}_{D_1 D_2} 1_{D_1}, 1_{D_2} \rangle \langle u,1_{D_1} \rangle - \frac{1}{\sqrt{|D_2|}}\langle R_{D_1 D_2}^{(0)} u_1,1_{D_2} \rangle + \\
    & \ \ \ + (\d k_0)^2 \log(\d k_0 \hat{\g}) \left( \frac{1}{|D_1|\sqrt{|D_2|}} \langle R^{(1)}_{D_1 D_2} 1_{D_1}, 1_{D_2} \rangle \langle u,1_{D_1} \rangle - \frac{1}{\sqrt{|D_2|}}\langle R_{D_1 D_2}^{(1)} u_1,1_{D_2} \rangle \right)
\end{align*}
We observe that
\begin{align*}
    &\frac{1}{|D_1|\sqrt{|D_2|}} \langle \hat{K}^{\d k_0}_{D_1 D_2} 1_{D_1}, 1_{D_2} \rangle \langle u,1_{D_1} \rangle - \frac{1}{\sqrt{|D_2|}}\langle \hat{K}_{D_1 D_2}^{\d k_0} u_1,1_{D_2} \rangle = \\
    & = - \frac{1}{|D_1|\sqrt{|D_2|}} \frac{1}{2\pi} \log(\d k_0 \hat{\g}) \int_{D_2} \int_{D_1} \upd y \upd x \int_{D_1} u(x) \upd x - \frac{1}{\sqrt{|D_2|}} \frac{1}{2\pi} \log(\d k_0 \hat{\g}) \int_{D_2} \int_{D_1} u(y) \upd y \upd x \\
    & = - \frac{1}{2\pi} \log(\d k_0 \hat{\g}) \int_{D_1} u(x) \upd x \left[ \frac{1}{|D_1|\sqrt{|D_2|}} |D_1| |D_2| - \frac{1}{\sqrt{|D_2|}} |D_2| \right]\\
    & = 0.
\end{align*}
Also,
\begin{align*}
    &\frac{1}{|D_1|\sqrt{|D_2|}} \langle R^{(0)}_{D_1 D_2} 1_{D_1}, 1_{D_2} \rangle \langle u,1_{D_1} \rangle - \frac{1}{\sqrt{|D_2|}}\langle R_{D_1 D_2}^{(0)} u_1,1_{D_2} \rangle = \\
    & = \frac{1}{|D_1|\sqrt{|D_2|}} \int_{D_2} \int_{D_1} \log|x-y|\upd y \upd x \int_{D_1}u(x)\upd x - \frac{1}{\sqrt{|D_2|}} \int_{D_2} \int_{D_1} \log|x-y| u(y) \upd y \upd x\\
    &= \mathcal{W}_1^{(1)}.
\end{align*}
We know that for $y\in D_1$ and $x\in D_2$, it holds
\begin{align}\label{log bound}
    \log|\a_1 - 2\r| \leq \log |x-y| \leq \log|\a_1 + 2\r|.
\end{align}
This gives
\begin{align} \label{W11 left}
    \sqrt{|D_2|} \log|\a_1 - 2\r| \int_{D_1}u(x)\upd x - \sqrt{|D_2|} \log|\a_1 + 2\r| \int_{D_1} u(x) \upd x \leq \mathcal{W}_1^{(1)}
\end{align}    
and
\begin{align} \label{W11 right}    
    \mathcal{W}_1^{(1)} \leq \sqrt{|D_2|} \log|\a_1 + 2\r| \int_{D_1}u(x)\upd x - \sqrt{|D_2|} \log|\a_1 - 2\r| \int_{D_1} u(x) \upd x.
\end{align}
It is direct that as $\r\rightarrow0$, the left hand side of (\ref{W11 left}) and the right hand side of (\ref{W11 right}), both converge to 0. Thus, as $\r\rightarrow0$,
$$
\mathcal{W}_1^{(1)}\rightarrow0.
$$
Then, 
\begin{align*}
    &\frac{1}{|D_1|\sqrt{|D_2|}} \langle R^{(1)}_{D_1 D_2} 1_{D_1}, 1_{D_2} \rangle \langle u,1_{D_1} \rangle - \frac{1}{\sqrt{|D_2|}}\langle R_{D_1 D_2}^{(1)} u_1,1_{D_2} \rangle = \\
    & = \frac{1}{|D_1|\sqrt{|D_2|}} \int_{D_2} \int_{D_1} \frac{1}{|x-y|} \upd y \upd x \int_{D_1}u(x)\upd x - \frac{1}{\sqrt{|D_2|}} \int_{D_2} \int_{D_1} \frac{ u(y) }{|x-y|} \upd y \upd x\\
    &= \mathcal{W}_1^{(2)}.
\end{align*}
We know that for $y\in D_1$ and $x\in D_2$, it holds
\begin{align}\label{1/x bound}
    \frac{1}{|\a_1 + 2\r|} \leq \frac{1}{|x-y|} \leq \frac{1}{|\a_1 - 2\r|}.
\end{align}
This gives
\begin{align}\label{W12 left}
    \frac{\sqrt{|D_2|}}{|\a_2 + 2\r|} \int_{D_1} u(x) \upd x - \frac{\sqrt{|D_2|}}{|\a_2 - 2\r|} \int_{D_1} u(x) \upd x
    \leq \mathcal{W}_1^{(2)}
\end{align}
and
\begin{align}\label{W12 right}
    \mathcal{W}_1^{(2)} \leq \frac{\sqrt{|D_2|}}{|\a_2 - 2\r|} \int_{D_1} u(x) \upd x - \frac{\sqrt{|D_2|}}{|\a_2 + 2\r|} \int_{D_1} u(x) \upd x
\end{align}
Again, we see that as $\r\rightarrow0$, the left hand side of (\ref{W12 left}) and the right hand side of (\ref{W12 right}), both converge to 0. Thus, as $\r\rightarrow0$,
$$
\mathcal{W}_1^{(2)}\rightarrow0.
$$
Thus, gathering these results, we get that
\begin{align*}
    \mathcal{W}_1 = \mathcal{W}_1^{(1)} + (\d k_0)^2 \log(\d k_0 \hat{\g}) \mathcal{W}_1^{(2)},
\end{align*}
which, at hand, shows that, as $\r\rightarrow0$,
\begin{align*}
    \mathcal{W}_1 \rightarrow 0.
\end{align*}
Let us now show the convergence of $\mathcal{W}_2$ as $\r\to0$. Then, we note that this also gives the convergence of $\mathcal{W}_3$, since the calculations are of the same order. Keeping in mind that $\lim_{\r\rightarrow0}\mathcal{B}(\w,\d)$ is finite, 
we will study
\begin{align*}
    \tilde{\mathcal{W}}_2 := \frac{\mathcal{W}_2}{\mathcal{B}(\w,\d)} = \langle N^{\d k_0}_{D_2 D_1} u_2, \hat{1}_{D_1}\rangle \langle N_{D_1 D_2}^{\d k_0} \hat{1}_{D_1},\hat{1}_{D_2} \rangle - \langle N_{D_1 D_2}^{\d k_0} \hat{1}_{D_1},\hat{1}_{D_2} \rangle^2  \langle u, \hat{1}_{D_2} \rangle,
\end{align*}
which is
\begin{align*}
    \tilde{\mathcal{W}}_2 &= \Big[\langle \hat{K}^{\d k_0}_{D_2 D_1} u_2, \hat{1}_{D_1} \rangle \langle \hat{K}_{D_1 D_2}^{\d k_0} \hat{1}_{D_1},\hat{1}_{D_2} \rangle - \langle \hat{K}_{D_1 D_2}^{\d k_0} \hat{1}_{D_1},\hat{1}_{D_2} \rangle^2 \langle u, \hat{1}_{D_2} \rangle \Big]  + \\
    & \ \ + \Big[\langle R^{(0)}_{D_2 D_1} u_2, \hat{1}_{D_1} \rangle \langle R_{D_1 D_2}^{(0)} \hat{1}_{D_1},\hat{1}_{D_2} \rangle - \langle R_{D_1 D_2}^{(0)} \hat{1}_{D_1},\hat{1}_{D_2} \rangle^2 \langle u, \hat{1}_{D_2} \rangle\Big] +\\
    & \ \ + \Big((\d k_0)^2 \log(\d k_0 \hat{\g})\Big)^2 \Big[ \langle R^{(1)}_{D_2 D_1} u_2, \hat{1}_{D_1} \rangle \langle R_{D_1 D_2}^{(1)} \hat{1}_{D_1},\hat{1}_{D_2} \rangle - \langle R_{D_1 D_2}^{(1)} \hat{1}_{D_1},\hat{1}_{D_2} \rangle^2 \langle u, \hat{1}_{D_2} \rangle \Big] + \\
    & \ \ + \Big[ \langle \hat{K}_{D_1 D_2}^{\d k_0} \hat{1}_{D_1},\hat{1}_{D_2} \rangle \langle R^{(0)}_{D_2 D_1} u_2, \hat{1}_{D_1} \rangle + \langle \hat{K}_{D_1 D_2}^{\d k_0} u_2,\hat{1}_{D_1} \rangle \langle R^{(0)}_{D_2 D_1} \hat{1}_{D_2}, \hat{1}_{D_1} \rangle \\
    & \ \ \ \ \ \ \ \ \ \ \ \ \ \ \ \ \ \ \ \ \ \ \ \ \ \ \ \ \ \ \ \ \ \ \ \ \ \ \ \ \ \ \ \ \ \ \ \ \ \ - 2 \langle \hat{K}_{D_1 D_2}^{\d k_0} \hat{1}_{D_1},\hat{1}_{D_2} \rangle \langle R_{D_1 D_2}^{(0)} \hat{1}_{D_1},\hat{1}_{D_2} \rangle \langle u, \hat{1}_{D_2} \rangle \Big] + \\
    &\ \ + (\d k_0)^2 \log(\d k_0 \hat{\g}) \Big[ \langle \hat{K}_{D_1 D_2}^{\d k_0} \hat{1}_{D_1},\hat{1}_{D_2} \rangle \langle R^{(1)}_{D_2 D_1} u_2, \hat{1}_{D_1} \rangle + \langle \hat{K}_{D_1 D_2}^{\d k_0} u_2,\hat{1}_{D_2} \rangle \langle R^{(1)}_{D_2 D_1} \hat{1}_{D_2}, \hat{1}_{D_1} \rangle \\
    & \ \ \ \ \ \ \ \ \ \ \ \ \ \ \ \ \ \ \ \ \ \ \ \ \ \ \ \ \ \ \ \ \ \ \ \ \ \ \ \ \ \ \ \ \ \ \ \ \ \ - 2 \langle \hat{K}_{D_1 D_2}^{\d k_0} \hat{1}_{D_1},\hat{1}_{D_2} \rangle \langle R_{D_1 D_2}^{(1)} \hat{1}_{D_1},\hat{1}_{D_2} \rangle \langle u, \hat{1}_{D_2} \rangle \Big] + \\
    &\ \ + (\d k_0)^2 \log(\d k_0 \hat{\g}) \Big[ \langle R_{D_1 D_2}^{(0)} \hat{1}_{D_1},\hat{1}_{D_2} \rangle \langle R^{(1)}_{D_2 D_1} u_2, \hat{1}_{D_1} \rangle + \langle R_{D_1 D_2}^{(0)} u_2,\hat{1}_{D_2} \rangle \langle R^{(1)}_{D_2 D_1} \hat{1}_{D_2}, \hat{1}_{D_1} \rangle \\
    & \ \ \ \ \ \ \ \ \ \ \ \ \ \ \ \ \ \ \ \ \ \ \ \ \ \ \ \ \ \ \ \ \ \ \ \ \ \ \ \ \ \ \ \ \ \ \ \ \ \ - 2 \langle R_{D_1 D_2}^{(0)} \hat{1}_{D_1},\hat{1}_{D_2} \rangle \langle R_{D_1 D_2}^{(1)} \hat{1}_{D_1},\hat{1}_{D_2} \rangle \langle u, \hat{1}_{D_2} \rangle \Big] \\
    &=: \tilde{\mathcal{W}}_2^{(1)} + \tilde{\mathcal{W}}_2^{(2)} + \Big((\d k_0)^2 \log(\d k_0 \hat{\g})\Big)^2 \tilde{\mathcal{W}}_2^{(3)} + \tilde{\mathcal{W}}_2^{(4)} \\
    & \ \ \ \ \ \ \ \ \ \ \ \ \ \ \ \ \ \ \ \ \ \ \ \ \ \ \ \ \ \ \ \ \ \ \ \ \ \ \ \ \ \ \ \ \ \ \ \ \ \ + (\d k_0)^2 \log(\d k_0 \hat{\g}) \tilde{\mathcal{W}}_2^{(5)} + (\d k_0)^2 \log(\d k_0 \hat{\g}) \tilde{\mathcal{W}}_2^{(6)}.
\end{align*}
We will consider each of the $\tilde{\mathcal{W}}_2^{(i)}, i = 1,\dots,6$ separately. We observe that
\begin{align*}
    \tilde{\mathcal{W}}_2^{(1)} &= \frac{1}{4\pi^2}\log(\d k_0 \hat{\g})^2 \Big[ \frac{1}{|D_1|\sqrt{|D_2|}} \int_{D_1} \int_{D_2} u(y) \upd y \upd x \int_{D_1} \int_{D_2} \upd y \upd x - \\
    & \quad \quad \quad \quad \quad \quad \quad \quad \quad \quad \quad \quad - \frac{1}{|D_1||D_2|\sqrt{|D_2|}} \left(\int_{D_1} \int_{D_2} \upd y \upd x \right)^2 \int_{D_2} u(y) \upd y \Big] = 0.
\end{align*}
Then,
\begin{align*}
    \tilde{\mathcal{W}}_2^{(2)} &= \langle R_{D_1 D_2}^{(0)} \hat{1}_{D_1},\hat{1}_{D_2} \rangle \Big[ \langle R^{(0)}_{D_2 D_1} u_2, \hat{1}_{D_1} \rangle - \langle R_{D_1 D_2}^{(0)} \hat{1}_{D_1},\hat{1}_{D_2} \rangle \langle u, \hat{1}_{D_2} \rangle \Big].
    %\frac{1}{|D_1|\sqrt{|D_2|}}  \int_{D_1} \int_{D_2} \log|x-y| u(y) \upd y \upd x \int_{D_1} \int_{D_2} \log|x-y| \upd y \upd x \\
    %& \quad \quad \quad \quad \quad \quad \quad \quad \quad \quad \quad \quad - \frac{1}{|D_1||D_2|\sqrt{|D_2|}} \left(\int_{D_1} \int_{D_2} \log|x-y| \upd y \upd x \right)^2 \int_{D_2} u(y) \upd y \\
\end{align*}
We know that, as $\r\to0$, 
$$
\langle R_{D_1 D_2}^{(0)} \hat{1}_{D_1},\hat{1}_{D_2} \rangle \to 0
$$
and, up to changing the indices, from (\ref{W11 left}) and (\ref{W11 right}), we have shown that as $\r\to0$
$$
\langle R^{(0)}_{D_2 D_1} u_2, \hat{1}_{D_1} \rangle - \langle R_{D_1 D_2}^{(0)} \hat{1}_{D_1},\hat{1}_{D_2} \rangle \langle u, \hat{1}_{D_2} \rangle \to 0.
$$
Thus, as $\r\to0$, it holds that
\begin{align*}
    \tilde{\mathcal{W}}_2^{(2)} \to 0.
\end{align*}
In the same reasoning, we have,
\begin{align*}
    \tilde{\mathcal{W}}_2^{(3)} = \Big((\d k_0)\log(\d k_0 \hat{\g})\Big)^2 \langle R^{(1)}_{D_2 D_1} \hat{1}_{D_2}, \hat{1}_{D_1} \rangle \Big[ \langle R^{(1)}_{D_2 D_1} u_2, \hat{1}_{D_1} \rangle - \langle R^{(1)}_{D_2 D_1} \hat{1}_{D_2}, \hat{1}_{D_1} \rangle \Big]
\end{align*}
where, as $\r\to0$,
$$
\langle R_{D_1 D_2}^{(1)} \hat{1}_{D_1},\hat{1}_{D_2} \rangle \to 0
$$
and, up to changing the indices, from (\ref{W12 left}) and (\ref{W12 right}), we have, as $\r\to0$
$$
\langle R^{(1)}_{D_2 D_1} u_2, \hat{1}_{D_1} \rangle - \langle R_{D_1 D_2}^{(1)} \hat{1}_{D_1},\hat{1}_{D_2} \rangle \langle u, \hat{1}_{D_2} \rangle \to 0.
$$
Hence, as $\r\to 0$,
\begin{align*}
    \tilde{\mathcal{W}}^{(3)}_2 \to 0.
\end{align*}
Now,
\begin{align*}
    \tilde{\mathcal{W}}^{(4)}_2 &= \frac{1}{|D_1|\sqrt{|D_2|}} \int_{D_1} \int_{D_2} u(y) \upd y \upd x \int_{D_1} \int_{D_2} \log|x-y| \upd y \upd x +\\
    & \ \ + \frac{1}{|D_1|\sqrt{|D_2|}} \int_{D_1} \int_{D_2} \upd y \upd x \int_{D_1} \int_{D_2} \log|x-y| u(y) \upd y \upd x - \\
    & \ \ - \frac{2}{|D_1||D_2|\sqrt{|D_2|}} \int_{D_1} \int_{D_2} \upd y \upd x \int_{D_1} \int_{D_2} \log|x-y| \upd y \upd x \int_{D_2} u(y) \upd y\\
    &= \sqrt{|D_2|} \int_{D_1} \int_{D_2} \log|x-y| u(y) \upd y \upd x - \frac{1}{\sqrt{|D_2|}} \int_{D_1} \int_{D_2} \log|x-y| \upd y \upd x \int_{D_2} u(y) \upd y.
\end{align*}
Using the bounds (\ref{log bound}), we have
\begin{align*}
    |D_1| \sqrt{|D_2|} \Big(\log|\a_1 - 2\r| - \log|\a_1 + 2\r|\Big) \int_{D_2} u(y) \upd y \leq \tilde{\mathcal{W}}^{(4)}_2,
\end{align*}
and
\begin{align*}
    \tilde{\mathcal{W}}^{(4)}_2 \leq  |D_1| \sqrt{|D_2|} \Big(\log|\a_1 + 2\r| - \log|\a_1 - 2\r|\Big) \int_{D_2} u(y) \upd y,
\end{align*}
which gives, as $\r\to0$,
\begin{align*}
    \tilde{\mathcal{W}}^{(4)}_2 \to 0.
\end{align*}
Then,
\begin{align*}
    \tilde{\mathcal{W}}^{(5)}_2 &= \frac{1}{|D_1|\sqrt{|D_2|}} \int_{D_1} \int_{D_2} u(y) \upd y \upd x \int_{D_1} \int_{D_2} \frac{1}{|x-y|} \upd y \upd x +\\
    & \ \ + \frac{1}{|D_1|\sqrt{|D_2|}} \int_{D_1} \int_{D_2} \upd y \upd x \int_{D_1} \int_{D_2} \frac{1}{|x-y|} u(y) \upd y \upd x - \\
    & \ \ - \frac{2}{|D_1||D_2|\sqrt{|D_2|}} \int_{D_1} \int_{D_2} \upd y \upd x \int_{D_1} \int_{D_2} \frac{1}{|x-y|} \upd y \upd x \int_{D_2} u(y) \upd y\\
    &= \sqrt{|D_2|} \int_{D_1} \int_{D_2} \frac{1}{|x-y|} u(y) \upd y \upd x - \frac{1}{\sqrt{|D_2|}} \int_{D_1} \int_{D_2} \frac{1}{|x-y|} \upd y \upd x \int_{D_2} u(y) \upd y.
\end{align*}
Using the bounds (\ref{1/x bound}), we have
\begin{align*}
    |D_1|\sqrt{|D_2|} \left(\frac{1}{\a_1+2\r}-\frac{1}{\a_1-2\r}\right) \int_{D_2} u(y) \upd y \leq \tilde{\mathcal{W}}^{(5)}_2,
\end{align*}
and 
\begin{align*}
    \tilde{\mathcal{W}}^{(5)}_2 \leq |D_1|\sqrt{|D_2|} \left(\frac{1}{\a_1-2\r}-\frac{1}{\a_1+2\r}\right) \int_{D_2} u(y) \upd y,
\end{align*}
which gives, as $\r\to0$,
\begin{align*}
    \tilde{\mathcal{W}}^{(5)}_2 \to 0.
\end{align*}
Finally,
\begin{align*}
    \tilde{\mathcal{W}}^{(6)}_2 &= \frac{1}{|D_1|\sqrt{|D_2|}} \int_{D_1} \int_{D_2} \log|x-y| u(y) \upd y \upd x \int_{D_1} \int_{D_2} \frac{1}{|x-y|} \upd y \upd x +\\
    & \ \ + \frac{1}{|D_1|\sqrt{|D_2|}} \int_{D_1} \int_{D_2} \log|x-y| \upd y \upd x \int_{D_1} \int_{D_2} \frac{1}{|x-y|} u(y) \upd y \upd x - \\
    & \ \ - \frac{2}{|D_1||D_2|\sqrt{|D_2|}} \int_{D_1} \int_{D_2} \log|x-y| \upd y \upd x \int_{D_1} \int_{D_2} \frac{1}{|x-y|} \upd y \upd x \int_{D_2} u(y) \upd y.
\end{align*}
Here, we combine the bounds (\ref{log bound}) and (\ref{1/x bound}) and get
\begin{align*}
   2|D_1|\sqrt{|D_2|} \left( \frac{\log|\a_1-2\r|}{|\a_1 + 2\r|} - \frac{\log|\a_1+2\r|}{|\a_1 - 2\r|} \right) \int_{D_2} u(y) \upd y \leq \tilde{\mathcal{W}}^{(6)}_2,
\end{align*}
and
\begin{align*}
    \tilde{\mathcal{W}}^{(6)}_2 \leq 2|D_1|\sqrt{|D_2|} \left( \frac{\log|\a_1+2\r|}{|\a_1 - 2\r|} - \frac{\log|\a_1-2\r|}{|\a_1 + 2\r|} \right) \int_{D_2} u(y) \upd y,
\end{align*}
which gives, as $\r\to0$,
\begin{align*}
    \tilde{\mathcal{W}}^{(6)}_2 \to 0.
\end{align*}
Thus, we have shown that for all $i=1,\dots,6$, as $\r\to0$,
\begin{align*}
    \tilde{\mathcal{W}}^{(i)}_2 \to 0,
\end{align*}
which shows that
\begin{align*}
    \tilde{\mathcal{W}}_2 \to 0, \ \ \ \ \text{ as } \r\to0.
\end{align*}
Also, repeating these calculation and re-indexing, we get
\begin{align*}
    \mathcal{W}_3 \to 0, \ \ \ \ \text{ as } \r\to0.
\end{align*}
Therefore, we have that 
\begin{align}\label{A4}
    \mathcal{W} = 0,
\end{align}
and hence, (\ref{lim approx}) follows.

Let us now move to the last part of the proof. We observe that for each $i=1,\dots,N,$
\begin{align*}
    \| u_i - p_{\r}u_i \|_{L^2(D)} \leq \| u \|_{L^2(D)} + \| p_{\r} u \|_{L^2(D)} = 2 \| u \|_{L^2(D)},
\end{align*}
where we have used (\ref{(A1)}), and we have that,
\begin{align*}
    \| u_i \|_{L^2(D)} = \left( \int_{D_i} |u(y)|^2 \upd y \right) \to 0, \ \ \ \text{ as } \r\to 0.
\end{align*}
Therefore, we obtain
\begin{align*}
    \| u_i - p_{\r}u_i \|_{L^2(D)} \to 0, \ \ \ \text{ as } \r\to 0,
\end{align*}
and so, for each subsequence of $\r'\in\mathbb{R}$, such that $\r'\to0$,
\begin{align}\label{A3}
    \lim_{\r'\to0} \| u_i - p_{\r'}u_i \|_{L^2(D)} = 0.
\end{align}

Hence, since (\ref{(A1)}), (\ref{A2}) and (\ref{A3}) hold, we have shown that all the assumptions of Theorem 2.10 in \cite{BLRM} hold. Thus, we conclude that, the approximation formula (\ref{approx formula}) holds.
\end{proof}

\bibliographystyle{abbrv}
\bibliography{references}{}

\end{document}